\newtheorem{theorem}{Theorem}
\theoremstyle{definition}
\newtheorem{corollary}{Corollary}
\newtheorem{definition}{Definition}
\newtheorem{example}{Example}
\newtheorem{lemma}{Lemma}
\newtheorem{proposition}{Proposition}
\newtheorem{remark}{Remark}
\numberwithin{equation}{section}
\newcommand{\OR}{\overline{R}}
\begin{document}
\allowdisplaybreaks
\title{On the Kauffman-Jones polynomial for virtual singular links}

\author{Carmen Caprau}
\address{Department of Mathematics, California State University-Fresno, 5245 North Backer Avenue, M/S PB 108, Fresno, CA 93740, USA}
\email{ccaprau@csufresno.edu}
\urladdr{http://zimmer.csufresno.edu/~ccaprau}
\author{Kelsey Friesen}
\address{Mathematics Department, Reedley College, 995 North Reed Ave, Reedley, CA 93654, USA}
\email{kelsey.friesen@reedleycollege.edu}

\date{}
\subjclass[2010]{57M27; 57M25}
\keywords{invariants for knots and links, Kauffman-Jones polynomial, singular knots, virtual knots}
\thanks{This work was supported by a grant from the Simons Foundation ($\#355640$, Carmen Caprau).}

\begin{abstract} 
We extend the Kamada-Miyazawa polynomial to virtual singular links, which is valued in $\mathbb{Z}[A^2, A^{-2}, h]$. The decomposition of the resulting polynomial into two components, one in $\mathbb{Z}[A^2, A^{-2}]$ and the other in $\mathbb{Z}[A^2, A^{-2}]h$ yields the decomposition of the Kauffman-Jones polynomial of virtual singular links into two components, one in $\mathbb{Z}[A^2, A^{-2}]$ and the other in $\mathbb{Z}[A^2, A^{-2}]A^2$, where both components are invariants for virtual singular links.
 \end{abstract}

\maketitle
\section{Introduction}\label{sec:intro}
A \textit{virtual singular link diagram} is a decorated immersion of $k$ ($k \in \mathbb{N}$) disjoint copies of $S^1$ into $\mathbb{R}^2$, with finitely many transverse double points each of which has information of over/under, singular, and virtual crossings, as indicated in Figure~\ref{fig:crossings}. If $k = 1$, we have a \textit{virtual singular knot diagram}, or equivalently, a virtual singular link diagram of one component. The over/under markings indicate the classical crossings. A filled in circle is used to represent a \textit{singular crossing}. \textit{Virtual crossings} are represented by placing a small circle around the point where the two arcs meet transversely. 

\begin{figure}[ht]
\[ \raisebox{0cm}{\includegraphics[height=0.5in]{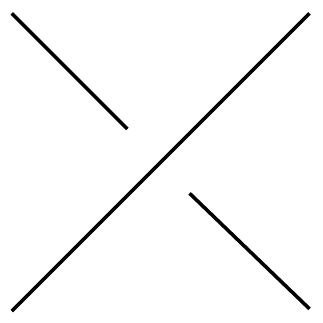}}  \hspace{.5in} \raisebox{0cm}{\includegraphics[height=0.5in]{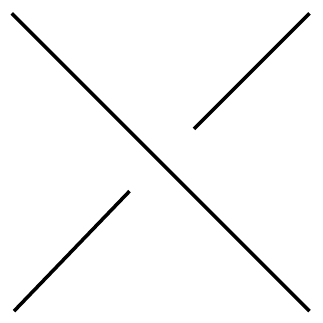}} \hspace{.5in}  \raisebox{0cm}{\includegraphics[height=0.5in]{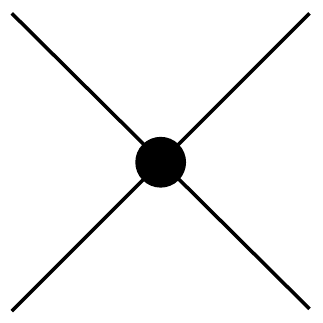}} \hspace{.5in} \raisebox{-.1cm}{\includegraphics[height=0.5in]{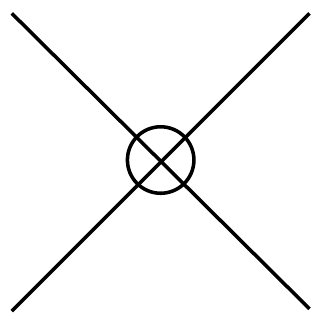}} \]
\caption{Types of crossings} \label{fig:crossings}
\end{figure}

Unless otherwise specified, we will use the term `link' to refer to both knots and links, for simplicity.

Similar to the case of virtual knot theory, there is a useful topological interpretation for virtual singular knot theory in terms of embeddings of singular links in thickened surfaces $S_g \times I$, where $S_g$ is a compact oriented surface of genus $g$ and $I$ is the unit interval. A diagram of a virtual singular link in $S_g \times I$ is then drawn on $S_g$, and virtual crossings are merely byproducts of the projection of the immersion into the surface $S_g$ composed with the projection of the surface $S_g$ into the plane $\mathbb{R}^2$.

Two virtual singular link diagrams are said to be \textit{equivalent} (or \textit{ambient isotopic}) if they are related by a finite sequence of the \textit{extended Reidemeister moves} depicted in Figure~\ref{fig:isotopies} (where only one possible choice of crossings is indicated in the diagrams). The move $RS_2$ exemplifies that the singular crossings can be regarded as rigid disks. Equivalently, there is a fixed ordering of the four strands meeting at a singular crossing and the cyclic ordering is determined via the rigidity of the disk. The set of moves in Figure~\ref{fig:isotopies} (where all possible crossing types for the classical crossings need to be considered) defines an equivalence relation on the set of virtual singular link diagrams. A \textit{virtual singular link} (or \textit{virtual singular link-type}) is then the equivalence class of a virtual singular link diagram. 

\begin{figure}[ht]
\[
 \raisebox{-.5cm}{\includegraphics[height=.4in]{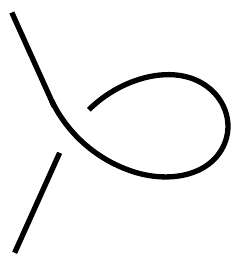}} \hspace{.2cm}  \stackrel{R_1}{\longleftrightarrow}  \hspace{.2cm} \raisebox{-.5cm}{\includegraphics[height=.4in]{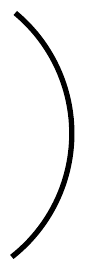}}  \hspace{1cm}  
 \raisebox{-.5cm}{\includegraphics[height=.4in]{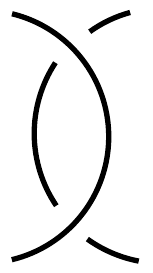}} \hspace{.2cm} \stackrel{R_2}{\longleftrightarrow} \hspace{.2cm} \raisebox{-.5cm}{\includegraphics[height=.4in]{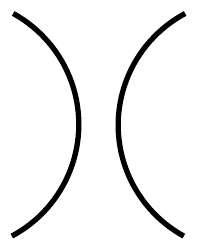}}  \hspace{1 cm}
 \raisebox{-.5cm}{\includegraphics[height=.4in]{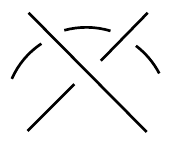}} \hspace{.2cm}  \stackrel{R_3}{\longleftrightarrow} \hspace{.2cm} \raisebox{-.5cm}{\includegraphics[height=.4in]{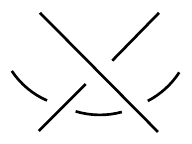}} 
\]  
\[  \raisebox{-10pt}{\includegraphics[height=.4in]{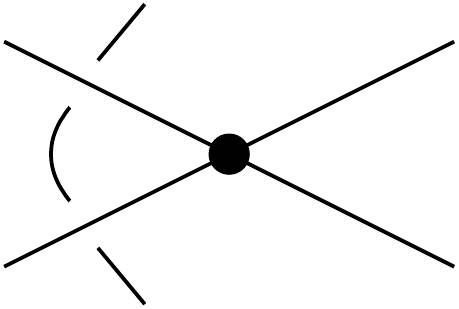}}\,\, \stackrel{RS_1}{\longleftrightarrow}
 \,\, \raisebox{-10pt}{\includegraphics[height=.4in]{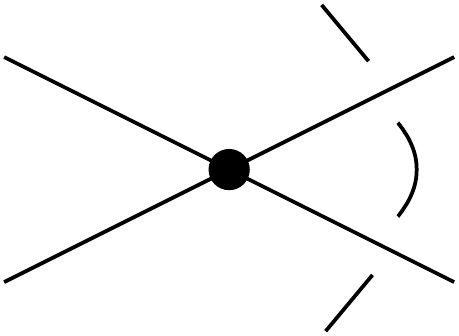}}  \hspace{1.5cm}
\raisebox{-10pt}{\includegraphics[height=.35in]{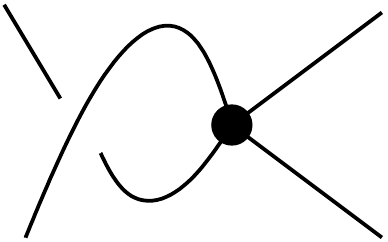}}\,\, \stackrel{RS_2}{\longleftrightarrow} \,\, \raisebox{15pt}{\includegraphics[height=.35in, angle = 180]{N5-left1}} \]
\[ \raisebox{-.5cm}{\includegraphics[height=.5in]{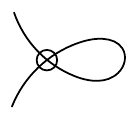}} \hspace{.2cm}  \stackrel{V_1}{\longleftrightarrow} \hspace{.2cm} \raisebox{-.5cm}{\includegraphics[height=.5in]{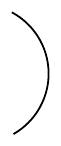}}  \hspace{2cm} 
\raisebox{-.5cm}{\includegraphics[height=.5in]{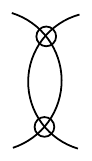}} \hspace{.2cm}  \stackrel{V_2}{\longleftrightarrow} \hspace{.2cm} \raisebox{-.5cm}{\includegraphics[height=.5in]{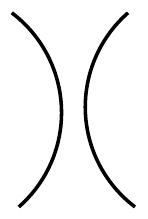}} \]
\[ \raisebox{-.5cm}{\includegraphics[height=.5in]{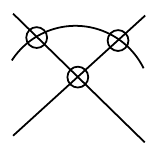}} \hspace{.2cm} \stackrel{V_{3-v}}{\longleftrightarrow}  \hspace{.2cm} 
\raisebox{-.5cm}{\includegraphics[height=.5in]{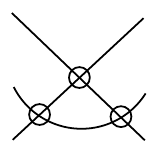}} \hspace{0.5cm} \raisebox{-.5cm}{\includegraphics[height=.5in]{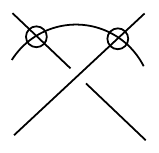}} \hspace{.2cm}   \stackrel{V_{3-c}}{\longleftrightarrow} \hspace{.2cm} \raisebox{-.5cm}{\includegraphics[height=.5in]{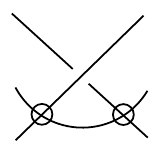}} 
 \hspace{0.5cm}
\raisebox{-.5cm}{\includegraphics[height=.5in]{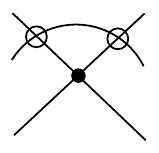}} \hspace{.2cm}   \stackrel{V_{3-s}}{\longleftrightarrow} \hspace{.2cm} \raisebox{-.5cm}{\includegraphics[height=.5in]{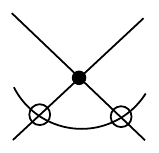}} 
\]

\caption{The extended Reidemeister moves} \label{fig:isotopies}
\end{figure}

In addition to the set of extended Reidemeister moves, there is a set of moves known as the \textit{forbidden moves} shown in Figure~\ref{fig:forbidden}. Although these moves look similar to those previously shown, they do not represent isotopic virtual singular link diagrams.

\begin{figure}[ht]
\[ \raisebox{-.5cm}{\includegraphics[height=.45in]{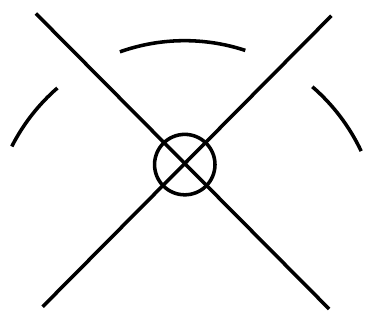}} \hspace{.2cm}   \longleftrightarrow  \hspace{.2cm}   \raisebox{-.5cm}{\includegraphics[height=.45in]{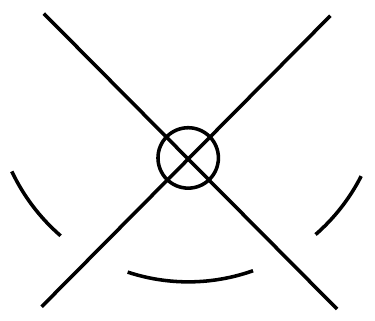}} \hspace{1.5cm}  \raisebox{-.5cm}{\includegraphics[height=.45in]{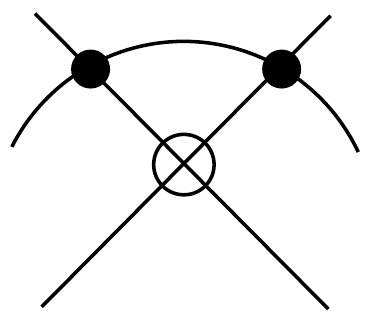}} \hspace{.2cm}   \longleftrightarrow  \hspace{.2cm}   \raisebox{-.5cm}{\includegraphics[height=.45in]{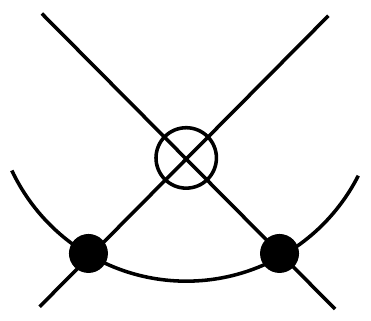}} \]

\[ \raisebox{-.5cm}{\includegraphics[height=.45in]{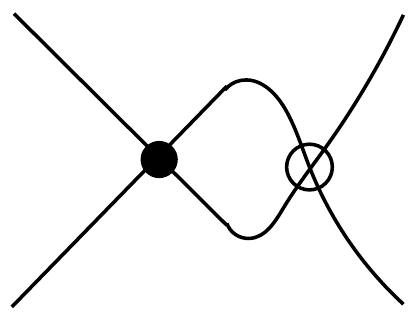}}  \hspace{.2cm}  \longleftrightarrow  \hspace{.2cm}  \raisebox{-.5cm}{\includegraphics[height=.45in]{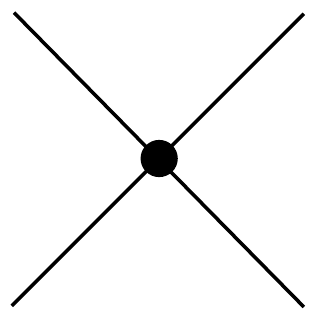}}  \hspace{.2cm}  \longleftrightarrow  \hspace{.2cm}  \raisebox{.7cm}{\includegraphics[height=.45in, angle=180]{f32}}  \]
\caption{The forbidden moves for virtual singular knot diagrams} \label{fig:forbidden}
\end{figure}

Throughout this paper, we work with oriented virtual singular links. The extended Reidemeister moves in Figure \ref{fig:isotopies} are still used to relate two equivalent virtual singular link diagrams, just now with given orientations on the strands, where all possible choices of orientation are considered and where the fixed orientation on the strands for the two diagrams in a certain move must agree.

In ~\cite{Ka2}, L. Kauffman constructed a polynomial invariant for oriented virtual links, denoted $f_L(A)$, which is an extension from classical links to virtual links of the polynomial $f[L] (A)$ constructed by the same author in~\cite{Ka1}. Recall that the polynomial $f[L] (A)$ provides a state model for the Jones polynomial~\cite{Jo}. We refer to the polynomials $f_L(A)$ and $f[L] (A)$ as the \textit{Kauffman-Jones polynomials} for virtual links and classical links, respectively. 

The goal of this paper is to go one step further and extend the polynomial $f_L(A)$ to virtual singular links. In doing so, we provide two definitions for our polynomial, which we denote by $\left < L \right>$, where $L$ is a virtual singular link. The first definition uses skein relations, while the second definition provides a state summation formula for the new polynomial $\left < L\right>$. Using a state-sum formula, N. Kamada's and Y. Miyazawa's proved in~\cite{KM} that the Kauffman-Jones polynomial for virtual links splits non-trivially with respect to the powers of $A$ modulo four. In this paper, we use primarily skein relations to show that the same holds for our polynomial.  Specifically, we show that if $L$ is a virtual singular link with $k$ components, then $\left < \,L\, \right > = \phi(L) + \psi(L)$,
where  $\phi(L) \in \mathbb{Z}[A^4,A^{-4}] \cdot A^{2(k-1)}$ and $\psi(L) \in \mathbb{Z}[A^4,A^{-4}] \cdot A^{2k} $.
Both polynomials $\phi(L)$ and $\psi(L)$ are invariants for virtual singular links, as is $\left < L \right>$. Our results are similar in spirit to those by Kamada and Miyazawa, but the approach and proofs in our paper are different from those in~\cite{KM}, in that we make intensive use of our defining skein relations for the polynomial $\left < L \right>$, while~\cite{KM} relies exclusively on state-sum formulas.

To help the reader, we tried to have a self-contained exposition. The paper is organized as follows:  Given a virtual singular link $L$, we introduce our version of the Kauffman-Jones polynomial $\left < L \right> \in \mathbb{Z}[A^2,A^{-2}]$ in Section~\ref{sec:poly-def}. For this, we use skein relations and a type of graphs which we call purely virtual magnetic graphs. In Section~\ref{sec:second-poly} we define a two-variable polynomial $R(L) \in \mathbb{Z}[A^2,A^{-2}, h]$, which is an ambient isotopy invariant for virtual singular links $L$ and, when restricted to $h = 1$, equals the polynomial $\left < L \right>$. In Section~\ref{sec:split} we show that for any virtual singular link $L$ with $k$ components, the polynomial $R(L)$ can be written as $R(L)=\phi(L)h + \psi(L)$, where $\phi(L) \in \mathbb{Z}[A^4,A^{-4}] \cdot A^{2(k-1)}$ and $\psi(L) \in \mathbb{Z}[A^4,A^{-4}] \cdot A^{2k}$. It follows that the Kauffman-Jones polynomial $\left < L \right>$ for virtual singular links splits into two parts with respect to the powers of $A$ modulo 4.

\section{Kauffman-Jones polynomial for virtual singular links}\label{sec:poly-def}

In this section, we provide our approach in defining the Kauffman-Jones polynomial for virtual singular links.

\begin{definition}
We call a \textit{purely virtual magnetic graph} an immersed directed graph in $\mathbb{R}^2$ with bivalent vertices such that the edges are oriented alternately, as shown in Figure \ref{fig:alt_edges}(a), and with self-intersections represented as virtual crossings. Equivalently, each bivalent vertex is either a $\textit{sink}$, meaning the edges are oriented towards the vertex, or a \textit{source}, where the edges are oriented away from the vertex.
\end{definition}
We do allow for components in a purely virtual magnetic graph to consist of oriented closed loops without vertices, as shown in Figure \ref{fig:alt_edges}(b).

\begin{figure}[ht] 
\[
\put(70, -7){\fontsize{9}{11}(a)} \put(240, -7){\fontsize{9}{11}(b)}
\raisebox{.3in}{\includegraphics[height=.17in]{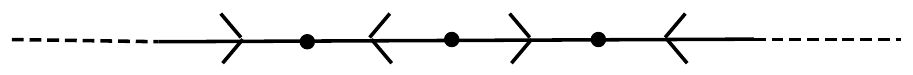}} \hspace{2cm} \includegraphics[height=1.2in]{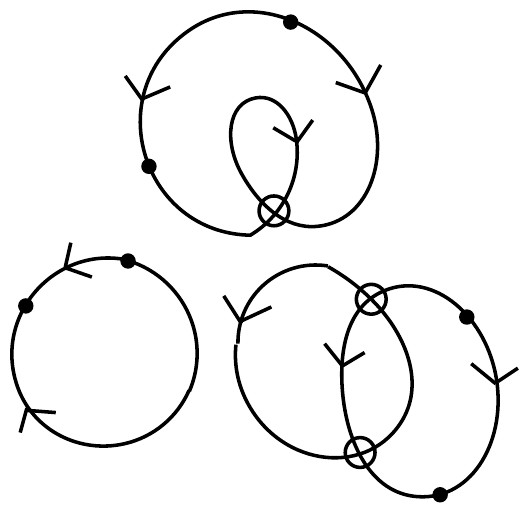} 
 \]
\caption{(a) Alternating oriented edges; (b) A purely virtual magnetic graph} \label{fig:alt_edges}
\end{figure}

We remark that N. Kamada and Y. Miyazawa used in their work~\cite{KM} the term ``magnetic graph diagrams" to refer to such graphs.

Given a virtual singular link $L$ with diagram $D$, we resolve all of the classical and singular crossings in $D$ in two ways, as shown in Figure~\ref{fig:or-dis resolutions}, leaving the virtual crossings  in place. We refer to the two resolutions of a crossing as the \textit{oriented} and, respectively, \textit{disoriented resolutions}.

\begin{figure}[ht] 
\[
\includegraphics[height=.5in]{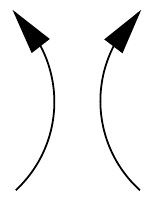} \hspace{1in} \includegraphics[height=.5in]{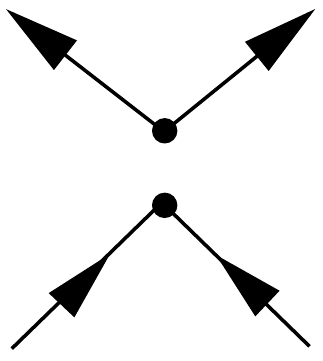}
\]
\caption{Oriented and disoriented resolutions}\label{fig:or-dis resolutions}
\end{figure}

By resolving each of the classical and singular crossings in the diagram $D$ in one of the two ways as shown above, we obtain a \textit{Kauffman-Jones state} associated with $D$. That is, a Kauffman-Jones state is a purely virtual magnetic graph. The Kauffman-Jones states of $D$ will receive certain weights, which are polynomials in $\mathbb{Z}[A^2, A^{-2}]$, where these weights are uniquely determined by the skein relations depicted in Figure~\ref{fig:extended-jones}. The Kauffman-Jones polynomial of $D$ is then a Laurent polynomial, denoted $\left< D \right>$, defined as a formal $\mathbb{Z}[A^2, A^{-2}]$-linear combination of the evaluations of all of the Kauffman-Jones states associated with $D$.

We remind the reader that the diagrams in both sides of a skein relation represent parts of larger diagrams that are identical, except near a point where they look as indicated in the skein relation. 

\begin{figure}[ht]
 \[ \left< \, \raisebox{-12pt}{\includegraphics[height=.4in]{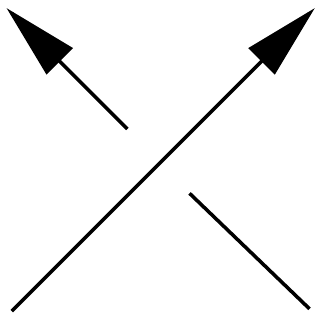}}\, \right> = -A^{-2} \left< \,  \raisebox{-12pt}{\includegraphics[height=.4in]{oriented}} \, \right> -  A^{-4} \left< \,  \raisebox{-12pt}{\includegraphics[height=.4in]{unoriented}} \, \right> \] 

 \[ \left< \, \raisebox{-12pt}{\includegraphics[height=.4in]{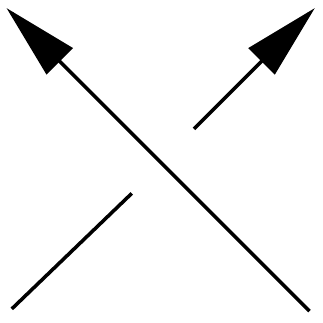}}\, \right> = -A^{2} \left< \, \raisebox{-12pt}{\includegraphics[height=.4in]{oriented}} \, \right> -  A^{4} \left< \, \raisebox{-12pt}{\includegraphics[height=.4in]{unoriented}} \, \right> \] 

\[ 
\left < \, \raisebox{-12pt}{\includegraphics[height=.4in]{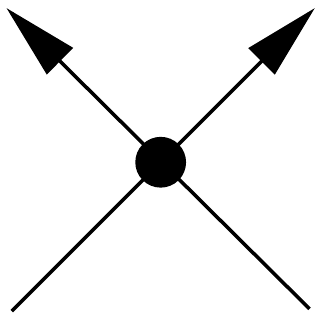}} \,\right > = (-A^{2}-A^{-2}) \left < \, \raisebox{-12pt}{\includegraphics[height=.4in]{oriented}}\, \right > + (-A^4-A^{-4}) \left < \, \raisebox{-12pt}{\includegraphics[height=.4in]{unoriented}}\, \right >
\]
\caption{Skein relations for classical and singular crossings} \label{fig:extended-jones}
\end{figure}

The Kauffman-Jones states associated with $D$ are uniquely evaluated using the \textit{graph skein relations} depicted in Figure~\ref{fig:graph-skein-rel}. Note that the first set of relations in Figure~\ref{fig:graph-skein-rel} say that we can remove or introduce pairs of adjacent bivalent vertices without changing the evaluation of a state, as long as the two vertices are oppositely oriented (one is a source and the other is a sink). Moreover, the last skein relation says that the Kauffman-Jones polynomial is multiplicative with respect to disjoint unions. Finally, any closed loop with or without virtual crossings is evaluated to $-A^2-A^{-2}$. Equivalently, Kauffman-Jones states are regarded up to the \textit{pure virtual moves} $V_1$, $V_2$ and $V_{3-v}$.

\begin{figure}
 \[
\left < \raisebox{-.3cm}{\includegraphics[height=.25in]{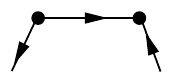}} \right > = \left < \raisebox{-.3cm}{\includegraphics[height=.25in]{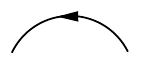}} \right >  \hspace{1cm} \left < \raisebox{-.3cm}{\includegraphics[height=.25in]{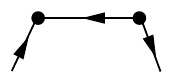}} \right > = \left < \raisebox{-.3cm}{\includegraphics[height=.25in]{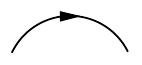}} \right >
\]

 \[
\left < \, \raisebox{-.3cm}{\includegraphics[height=.3in]{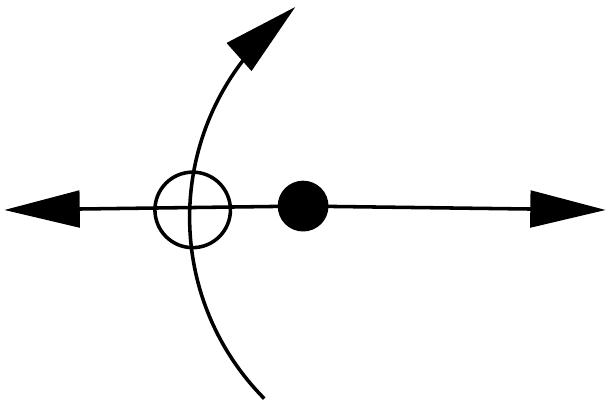}} \, \right > = \left < \, \raisebox{-.3cm}{\includegraphics[height=.3in]{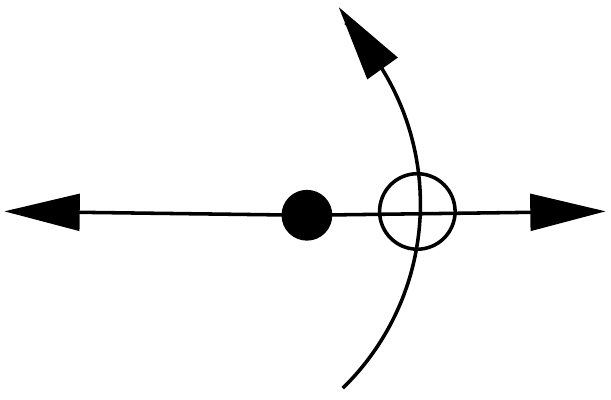}} \, \right >  \hspace{1cm} \left < \, \raisebox{-.3cm}{\includegraphics[height=.3in]{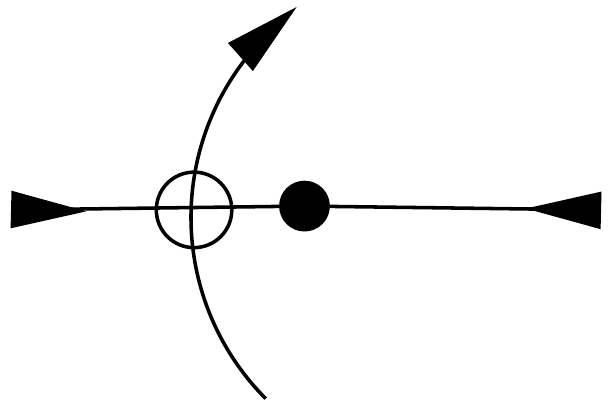}} \, \right > = \left <  \, \raisebox{-.3cm}{\includegraphics[height=.3in]{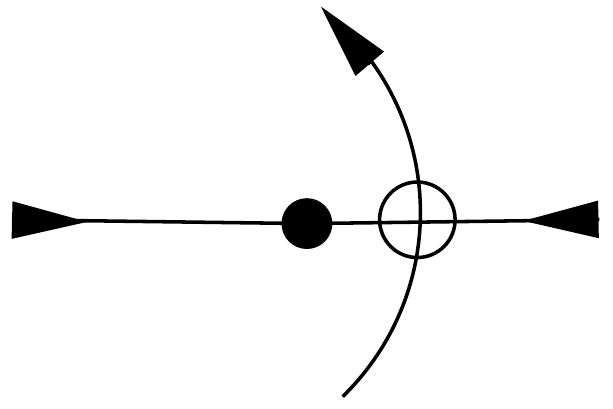}} \, \right >
\]

\[ \left < \raisebox{-.2cm}{\includegraphics[height=.3in]{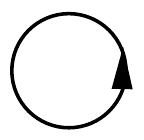}} \right > = -A^2-A^{-2} = \left < \raisebox{-.2cm}{\includegraphics[height=.3in]{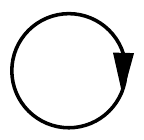}} \right >
\]
\[
  \left < \Gamma \cup \raisebox{-.2cm}{\includegraphics[height=.25in]{pspin}} \right > = (-A^2 -A^{-2} ) \left < \Gamma \right > = \left < \Gamma \cup \raisebox{-.2cm}{\includegraphics[height=.25in]{nspin}} \right >
\]
\caption{Graph skein relations for evaluating Kauffman-Jones states} \label{fig:graph-skein-rel}
\end{figure}

We note that, up to the first two sets of the graph skein relations and the pure virtual moves ($V_1, V_2$ and $V_{3-v}$), each Kauffman-Jones state is equivalent to a disjoint collection of directed circles in the plane.

A quick analysis of the defining skein relations for classical and singular crossings imply that the following skein relation holds, which will come in handy in proofs:
\begin{eqnarray}\label{eq:sing-crossing}
\left < \,\raisebox{-12pt}{\includegraphics[height=.4in]{osingular}} \,\right > = \left < \, \raisebox{-12pt}{\includegraphics[height=.4in]{opos}} \,\right > + \left < \,\raisebox{-12pt}{\includegraphics[height=.4in]{oneg}} \,\right >
\end{eqnarray}

\begin{theorem}\label{thm:inv}
The Kauffman-Jones polynomial $\left< \,\cdot \, \right>$ as defined by the skein relations in Figures~\ref{fig:extended-jones} and ~\ref{fig:graph-skein-rel}  is an ambient isotopy invariant for virtual singular links.
\end{theorem}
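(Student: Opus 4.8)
The plan is to verify invariance of $\left<\,\cdot\,\right>$ under each of the extended Reidemeister moves in Figure~\ref{fig:isotopies} (together with the well-definedness of the state evaluation, which is guaranteed by the graph skein relations in Figure~\ref{fig:graph-skein-rel}). Since the three classes of generating moves — the classical moves $R_1, R_2, R_3$; the singular moves $RS_1, RS_2$; and the virtual moves $V_1, V_2, V_{3-v}, V_{3-c}, V_{3-s}$ — each behave somewhat differently, I would treat them group by group. Throughout, the basic mechanism is the same: apply the defining skein relations of Figure~\ref{fig:extended-jones} to the crossing(s) inside the disk where the move is performed, expand both sides as $\mathbb{Z}[A^2,A^{-2}]$-linear combinations of Kauffman-Jones states, and then reduce the resulting states using the graph skein relations (removal of oppositely-oriented bivalent vertex pairs, the virtual detour-type relations $V_1, V_2, V_{3-v}$, and the loop value $-A^2 - A^{-2}$) until the two sides visibly agree.

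First I would dispatch the purely virtual moves $V_1, V_2, V_{3-v}$: these hold essentially by fiat, since the graph skein relations already declare Kauffman-Jones states to be taken up to exactly these moves, so the evaluation is unchanged. The mixed virtual moves $V_{3-c}$ and $V_{3-s}$ require resolving the classical (resp.\ singular) crossing that participates in the move; after resolution each term is a purely virtual magnetic graph in which the remaining virtual crossing can be slid past using $V_1$/$V_2$, matching the corresponding term on the other side. Next, the classical Reidemeister moves $R_1, R_2, R_3$: here one resolves each classical crossing via the first two skein relations of Figure~\ref{fig:extended-jones} and checks the identities at the level of states. For $R_1$ the key computation is that a curl resolves into an oriented term (which is the trivial arc) plus a disoriented term carrying a closed loop worth $-A^2-A^{-2}$, and the coefficients $-A^{-2}$ and $-A^{-4}$ (or $-A^2, A^4$ for the other sign) are arranged precisely so that $-A^{-2} \cdot 1 + (-A^{-4})(-A^2-A^{-2}) = 1$; similarly for the reverse-oriented curl. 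For $R_2$ the cross terms cancel, and $R_3$ follows from $R_2$ together with expanding one of the three crossings — this is the standard Kauffman bracket argument adapted to the oriented/disoriented resolutions, and I expect it to go through with only bookkeeping.

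The genuinely new content, and the step I expect to be the main obstacle, is the pair of singular moves $RS_1$ and $RS_2$. For $RS_1$ — which slides a strand past a singular crossing — I would use the singular skein relation of Figure~\ref{fig:extended-jones} (or equivalently \eqref{eq:sing-crossing}) to expand the singular vertex, then expand the extra classical crossing produced by the move, and verify that after reducing bivalent-vertex pairs and loops the two expansions coincide; the delicate point is that the singular crossing must be treated as a rigid disk, so one has to be careful that the two resolutions of the singular vertex interact correctly with the strand being slid across. For $RS_2$ — the $\pi$-rotation of a rigid singular vertex — invariance amounts to checking that the linear combination of the two resolutions of a singular crossing is symmetric under this rotation; since the oriented resolution and the disoriented resolution are each individually carried to themselves (or to an equivalent state up to the graph skein relations) under the rotation, and the coefficients $(-A^2-A^{-2})$ and $(-A^4-A^{-4})$ do not depend on orientation data that the rotation would disturb, this should follow, but it requires drawing the resolved pictures carefully to confirm nothing is reflected into an inequivalent configuration. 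Once all the generating moves are checked, together with the observation that the state evaluation is independent of the order in which crossings are resolved (immediate, since the skein expansions at distinct crossings commute) and independent of the choices made in reducing states (this is exactly the consistency of the graph skein relations, which may be taken as given), the theorem follows.
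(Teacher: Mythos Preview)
Your overall strategy---resolve the crossings in the move via the skein relations of Figure~\ref{fig:extended-jones}, reduce via the graph skein relations of Figure~\ref{fig:graph-skein-rel}, and match the two sides---is exactly what the paper does, and the ordering (virtual moves for free, then classical, then singular) is also the same. Two points deserve correction or sharpening.

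First, your $R_1$ sketch has the roles of the oriented and disoriented resolutions reversed. For a positive kink, the \emph{oriented} resolution produces the arc together with a disjoint loop (value $-A^2-A^{-2}$), while the \emph{disoriented} resolution, after removing the bivalent pair, is just the arc. Your displayed identity $-A^{-2}\cdot 1 + (-A^{-4})(-A^2-A^{-2}) = 1$ is false (it equals $A^{-6}$); the correct computation is $-A^{-2}(-A^2-A^{-2}) + (-A^{-4})\cdot 1 = 1 + A^{-4} - A^{-4} = 1$.

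Second, for the singular moves your plan is more laborious than necessary, and for $RS_2$ possibly off target. The paper exploits \eqref{eq:sing-crossing} not as an alternative expansion of the singular vertex into oriented/disoriented pieces, but as a \emph{reduction} to already-proved classical moves: replacing the singular crossing by the sum of a positive and a negative classical crossing turns $RS_1$ into two instances of $R_3$, turns $RS_2$ into an application of $R_2$, and turns $V_{3-s}$ into two instances of $V_{3-c}$. Your direct symmetry argument for $RS_2$ (that each resolution is individually fixed by the $\pi$-rotation) is not how the paper proceeds and would require more care than you indicate, since the move is rigid-vertex isotopy rather than a naive planar rotation; the paper's route via \eqref{eq:sing-crossing} and $R_2$ sidesteps this entirely.
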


\begin{proof}
We need to show that $\left< \,\cdot \, \right>$ is invariant under the extended Reidemeister moves  for virtual singular link diagrams given in Figure~\ref{fig:isotopies}.

By our definition of the evaluation of Kauffman-Jones states, we have that $\left< \,\cdot \, \right>$ is invariant under the virtual moves $V_1$, $V_2$ and $V_{3-v}$.

We consider first the Reidemeister move $R_1$ involving a positive classical crossing; the case of a negative classical crossing follows the same format, and thus we omit it here. We use the appropriate skein relation for the positive crossing, followed by the first and last graph skein relations for evaluating Kauffman-Jones states, as shown below:
\begin{eqnarray*}
\left< \, \raisebox{-.3cm}{\includegraphics[height=.3in]{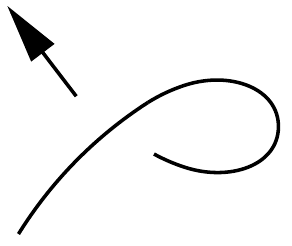}}\, \right> &=& -A^{-2} \left<\, \raisebox{-.3cm}{\includegraphics[height=.3in]{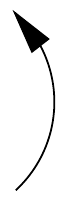}} \raisebox{-.2cm}{\includegraphics[height=.25in]{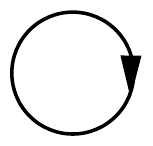}}\, \right> -A^{-4} \left<\, \raisebox{-.3cm}{\includegraphics[height=.3in]{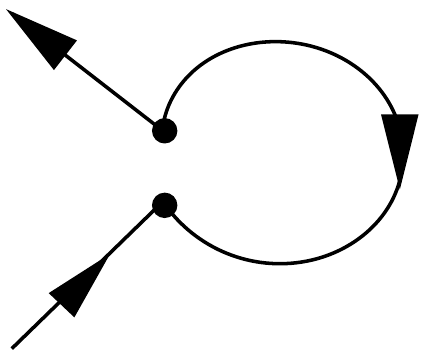}}\, \right> \\
&=& -A^{-2}(-A^2-A^{-2}) \left<\, \raisebox{-.3cm}{\includegraphics[height=.3in]{larc}}\, \right> -A^{-4} \left<\, \raisebox{-.3cm}{\includegraphics[height=.3in]{larc}}\, \right> \\
&=& (1+A^{-4}-A^{-4}) \left< \,\raisebox{-.3cm}{\includegraphics[height=.3in]{larc}} \,\right> \\
&=& \left< \,\raisebox{-.3cm}{\includegraphics[height=.3in]{larc}}\, \right>.
\end{eqnarray*}

 Next, we consider the Reidemeister move $R_2$ where both strands are oriented similarly, say upwards. We begin by resolving the top negative classical crossing:
 \begin{eqnarray*}
\left< \,\raisebox{-.4cm}{\includegraphics[height=.4in]{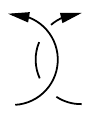}}\, \right> 
&=& -A^{2} \left<\, \raisebox{-.4cm}{\includegraphics[height=.4in]{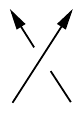}}\, \right> - A^{4} 
\left< \,\raisebox{-.4cm}{\includegraphics[height=.4in]{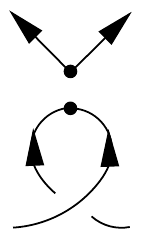}} \,\right> .
\end{eqnarray*} 
We then resolve the positive classical crossing in each of the resulting diagrams to obtain a linear combination of evaluations of Kauffman-Jones states associated with the original diagram, and then employ any necessary graph skein relations:
\begin{eqnarray*}
\left< \,\raisebox{-.4cm}{\includegraphics[height=.4in]{pr21}}\, \right> &=& -A^2 \left( -A^{-2} \left< \,\raisebox{-.4cm}{\includegraphics[height=.4in]{oriented}}\, \right> -A^{-4} 
\left< \,\raisebox{-.4cm}{\includegraphics[height=.4in]{unoriented}} \,\right> \right) \\
 & &- A^4 \left( -A^{-2} \left<\, \raisebox{-.4cm}{\includegraphics[height=.4in]{unoriented}}\, \right> -A^{-4} \left< \, \raisebox{-.5cm}{\includegraphics[height=.45in, width = .2in]{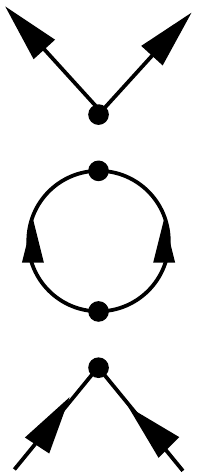}} \, \right> \right) \\
&=& \left< \,\raisebox{-.4cm}{\includegraphics[height=.4in]{oriented}}\, \right> + (A^2+A^{-2}) \left< \,\raisebox{-.4cm}{\includegraphics[height=.4in]{unoriented}}\, \right> + 1\cdot (-A^2-A^{-2}) \left<\, \raisebox{-.4cm}{\includegraphics[height=.4in]{unoriented}}\, \right> \\
&=& \left< \,\raisebox{-.4cm}{\includegraphics[height=.4in]{oriented}} \,\right>.
\end{eqnarray*}

The invariance under the Reidemeister move $R_2$ with oppositely oriented strands is verified below:
\begin{eqnarray*}
\left< \,\raisebox{-.4cm}{\includegraphics[height=.4in]{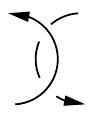}}\, \right> &=& -A^{-2} 
\left<\, \raisebox{-.4cm}{\includegraphics[height=.4in]{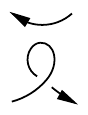}}\, \right> - A^{-4} 
\left<\, \raisebox{-.4cm}{\includegraphics[height=.4in]{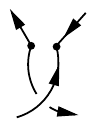}}\, \right> \\
&=& -A^{-2} \left( -A^{2} \left<\, \raisebox{-.4cm}{\includegraphics[height=.4in]{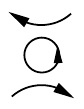}}\, \right> -A^{4} 
\left<\, \raisebox{-.4cm}{\includegraphics[height=.4in]{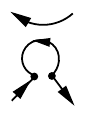}}\, \right> \right) \\
 & &- A^{-4} \left( -A^{2} \left< \,\raisebox{-.4cm}{\includegraphics[height=.4in]{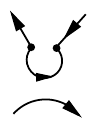}} \,\right> -A^{4} \left< \,\raisebox{-.4cm}{\includegraphics[height=.4in]{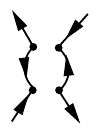}} \,\right> \right) \\
&=& 1\cdot (-A^2-A^{-2}) \left<\, \raisebox{-.5cm}{\includegraphics[height=.4in]{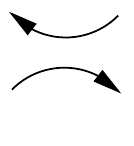}}\, \right> + (A^2+A^{-2}) \left<\, \raisebox{-.5cm}{\includegraphics[height=.4in]{pr28op}}\, \right> +  \left< \,\raisebox{-.4cm}{\includegraphics[height=.4in]{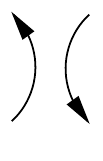}} \,\right>  \\
&=& \left<\, \raisebox{-.5cm}{\includegraphics[height=.4in]{pr29op}} \,\right>.
\end{eqnarray*}

Next, we consider the Reidemeister move $R_3$. Resolving one of the classical crossings in each diagram, and using that the polynomial is invariant under the Reidemeister move $R_2$, results in the following equalities:
\begin{eqnarray*}
\left< \,\raisebox{-.4cm}{\includegraphics[height=.4in]{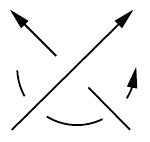}}\, \right> &=& -A^2 \left< \,\raisebox{-.4cm}{\includegraphics[height=.4in]{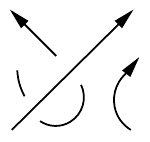}}\, \right>  -A^4 \left< \,\raisebox{-.4cm}{\includegraphics[height=.4in]{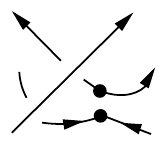}}\, \right> \\
&\stackrel{R_2}{=}& -A^2 \left< \,\raisebox{-.4cm}{\includegraphics[height=.4in]{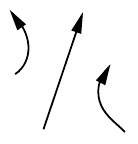}}\, \right> -A^4 \left<\, \raisebox{-.4cm}{\includegraphics[height=.4in]{pr33}}\, \right>,
\end{eqnarray*}
and
\begin{eqnarray*}
\left< \, \raisebox{-.4cm}{\includegraphics[height=.4in]{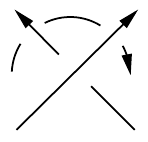}} \,\right> &=& -A^2 \left<\, \raisebox{-.4cm}{\includegraphics[height=.4in]{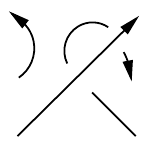}}\, \right>  -A^4 \left< \,\raisebox{-.4cm}{\includegraphics[height=.4in]{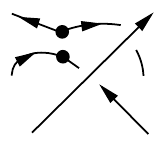}}\, \right> \\
&\stackrel{R_2}{=}& -A^2 \left< \,\raisebox{-.4cm}{\includegraphics[height=.4in]{pr34}}\, \right> -A^4 \left<\, \raisebox{-.4cm}{\includegraphics[height=.4in]{pr37}}\, \right>.
\end{eqnarray*}
The diagrams associated with the weight $-A^2$ have the same evaluations, since the diagrams differ by the move $R_2$. Thus, it remains to show the following:
\begin{eqnarray}\label{R3-proof}
\left<\, \raisebox{-.4cm}{\includegraphics[height=.4in]{pr33}} \,\right> =  \left< \,\raisebox{-.4cm}{\includegraphics[height=.4in]{pr37}}\, \right> .
\end{eqnarray}
Applying the defining skein relations for the crossings in the left hand side diagram of ~\eqref{R3-proof} leads to:
\begin{eqnarray*}
\left<\, \raisebox{-.4cm}{\includegraphics[height=.4in]{pr33}}\, \right> &=& -A^{-2} \left< \,\raisebox{-.4cm}{\includegraphics[height=.4in]{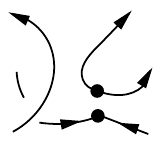}} \,\right> -A^{-4} \left<\, \raisebox{-.4cm}{\includegraphics[height=.4in]{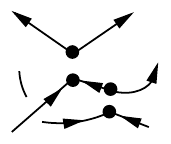}}\, \right> \\
&=& -A^{-2} \left( -A^2 \left<\, \raisebox{-.4cm}{\includegraphics[height=.4in]{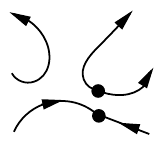}}\, \right> -A^4  \left< \,\raisebox{-.4cm}{\includegraphics[height=.4in]{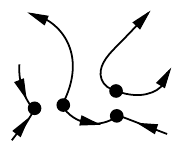}}\, \right> \right) \\
&&- A^{-4} \left( -A^2\left<\, \raisebox{-.4cm}{\includegraphics[height=.4in]{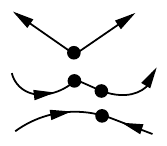}} \,\right> -A^4 \left< \,\raisebox{-.4cm}{\includegraphics[height=.4in]{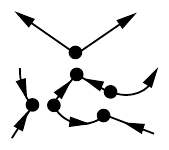}} \,\right> \right) \\
&=& \left<\, \raisebox{-.4cm}{\includegraphics[height=.4in]{pr310}} \,\right> + A^2 \left<\, \raisebox{-.4cm}{\includegraphics[height=.4in]{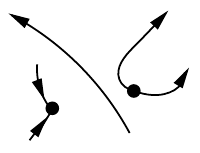}}\, \right> +A^{-2} \left<\, \raisebox{-.4cm}{\includegraphics[height=.4in]{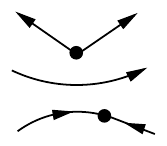}}\, \right> + \left<\, \raisebox{-.4cm}{\includegraphics[height=.4in]{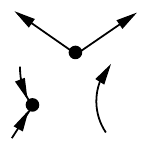}} \,\right>.
\end{eqnarray*}
Similarly, for the right hand side diagram of ~\eqref{R3-proof}, we obtain:
\begin{eqnarray*}
\left<\, \raisebox{-.4cm}{\includegraphics[height=.4in]{pr37}}\, \right> &=& -A^{-2} \left<\, \raisebox{-.4cm}{\includegraphics[height=.4in]{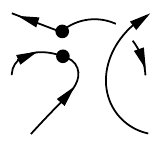}} \,\right> -A^{-4} \left< \,\raisebox{-.4cm}{\includegraphics[height=.4in]{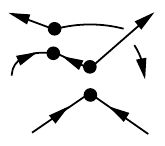}}\, \right> \\
&=& -A^{-2} \left( -A^2 \left<\, \raisebox{-.4cm}{\includegraphics[height=.4in]{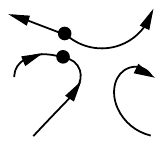}}\, \right> -A^4  \left<\, \raisebox{-.4cm}{\includegraphics[height=.4in]{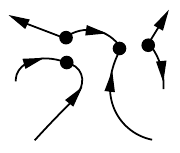}}\, \right> \right) \\
&&- A^{-4} \left( -A^2\left< \,\raisebox{-.4cm}{\includegraphics[height=.4in]{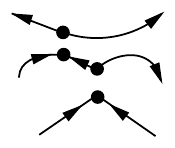}} \,\right> -A^4 \left< \,\raisebox{-.4cm}{\includegraphics[height=.4in]{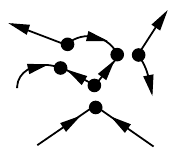}}\, \right> \right) \\
&=& \left< \,\raisebox{-.4cm}{\includegraphics[height=.4in]{pr319}} \,\right> + A^2 \left<\, \raisebox{-.4cm}{\includegraphics[height=.4in]{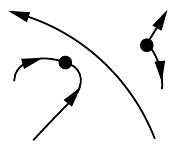}} \,\right> +A^{-2} \left< \,\raisebox{-.4cm}{\includegraphics[height=.4in]{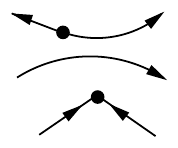}} \,\right> + \left<\, \raisebox{-.4cm}{\includegraphics[height=.4in]{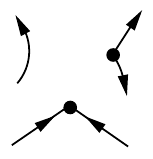}} \,\right>.
\end{eqnarray*}
Using planar isotopy, the diagrams with weights $A^2$ and, respectively, $A^{-2}$ on both sides of the desired identity are the same. Moreover, the first (or the last) diagram associated to the left hand side of the identity is the same as the last (or the first) diagram resulting from the right hand side of the identity. Thus, the identity~\eqref{R3-proof} holds, and therefore
\[
\left<\, \raisebox{-.4cm}{\includegraphics[height=.4in]{pr31}}\, \right> =  \left<\, \raisebox{-.4cm}{\includegraphics[height=.4in]{pr35}}\, \right> .
\]

The invariance under the other seven oriented versions of the Reidemeister move $R_3$ can be verified similarly (one can also use a generating set of Reidemeister moves argument).

To prove the invariance of $\left < \, \cdot \, \right >$ under the move $RS_1$ for virtual singular link diagrams, we use the skein relation \eqref{eq:sing-crossing} and that the polynomial is invariant under the Reidemeister move $R_3$, as we explain below:
\[
 \left <\, \raisebox{-.4cm}{\includegraphics[height=.4in]{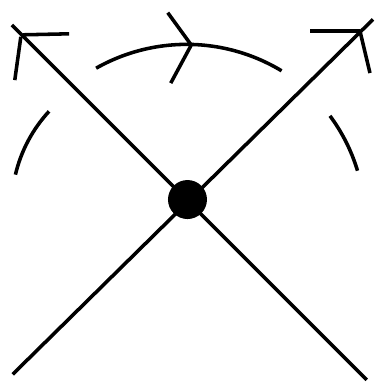}} \,\right > \, = \, \left < \,\raisebox{-.4cm}{\includegraphics[height=.4in]{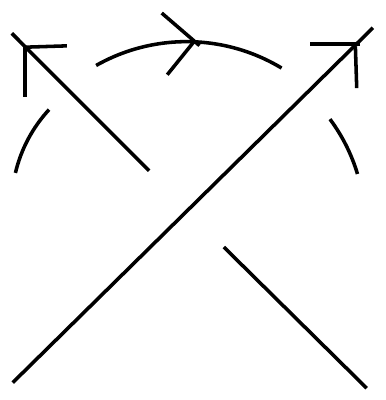}} \, \right >  \,+ \, \left < \,\raisebox{-.4cm}{\includegraphics[height=.4in]{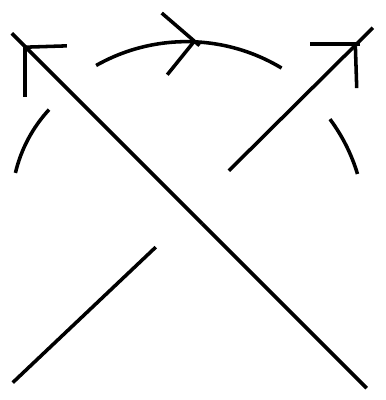}} \, \right >
\, =\,  \left < \,\raisebox{-.4cm}{\includegraphics[height=.4in]{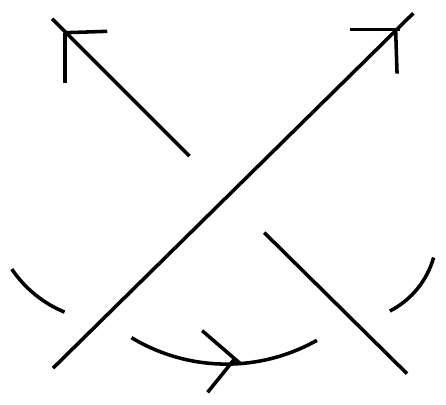}} \, \right > \, + \, \left < \,\raisebox{-.4cm}{\includegraphics[height=.4in]{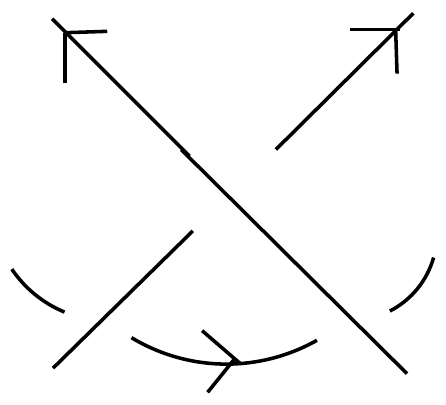}}\ \, \right >
\,=\,  \left < \, \raisebox{-.4cm}{\includegraphics[height=.4in]{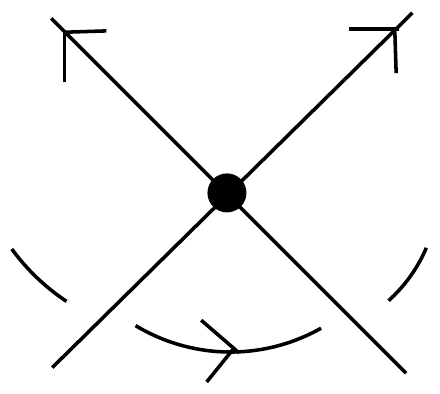}} \right >.  
\]

A similar approach is used to show the invariance under the move $RS_2$. First, we apply the skein relation~\eqref{eq:sing-crossing} to the singular crossing in the diagram on the left hand side of the move. We then use that the polynomial is invariant under the Reidemeister move $R_2$ to pull apart the strands in the second diagram of the resulting equality, and then we employ again the skein relation~\eqref{eq:sing-crossing}, to obtain the evaluation of the diagram on the right hand side of the move, as desired:
\begin{eqnarray*}
\left < \,\raisebox{-.3cm}{\includegraphics[height=.3in]{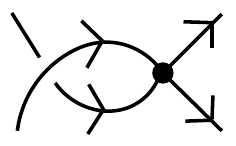}}\, \right >  &=& \left < \,\raisebox{-.35cm}{\includegraphics[height=.35in]{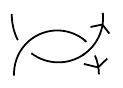}} \, \right > +  \left < \, \raisebox{-.35cm}{\includegraphics[height=.35in]{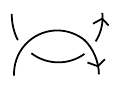}} \, \right > \\
& =& \left < \, \raisebox{-.35cm}{\includegraphics[height=.35in]{of2}} \,\right > +  \left < \,\raisebox{-.35cm}{\includegraphics[height=.35in]{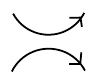}} \, \right >   \\ 
 &=& \left < \, \raisebox{-.35cm}{\includegraphics[height=.35in]{of2}} \, \right > +  \left < \,\raisebox{-.35cm}{\includegraphics[height=.35in]{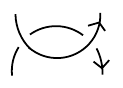}} \,\right > 
= \left < \, \raisebox{-.3cm}{\includegraphics[height=.3in]{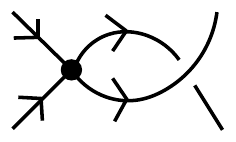}} \,\right >.  
\end{eqnarray*}

The invariance under the move $V_{3-c}$ follows from the defining skein relations for classical crossings and planar isotopy, as exemplified below:
\begin{eqnarray*}
\left < \, \raisebox{-.4cm}{\includegraphics[height=.4in]{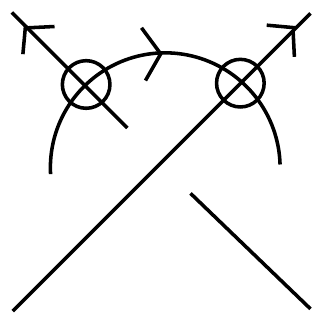}} \,\right > &=& -A^{-2} \left < \,\raisebox{-.4cm}{\includegraphics[height=.4in]{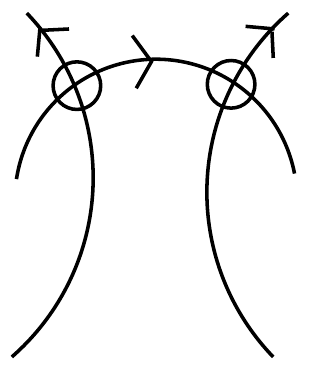}} \,\right > - A^{-4} \left < \,\raisebox{-.4cm}{\includegraphics[height=.4in]{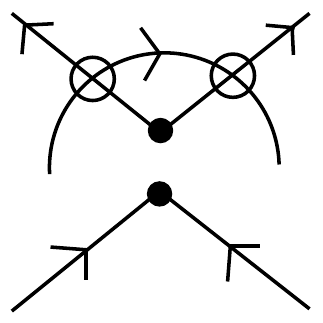}} \, \right > \\
 &=&  -A^{-2} \left < \, \raisebox{-.4cm}{\includegraphics[height=.4in]{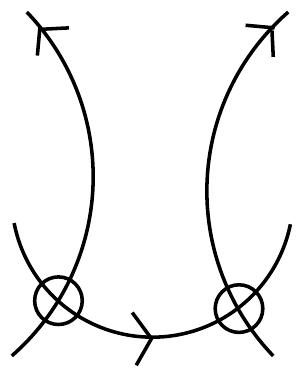}}\, \right > - A^{-4} \left < \,\raisebox{-.4cm}{\includegraphics[height=.4in]{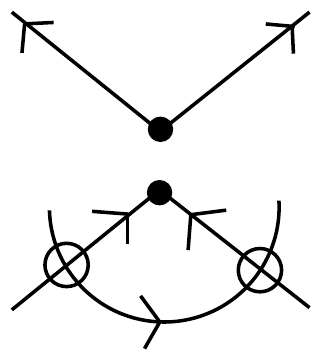}} \,\right > 
= \left <\, \raisebox{-.4cm}{\includegraphics[height=.4in]{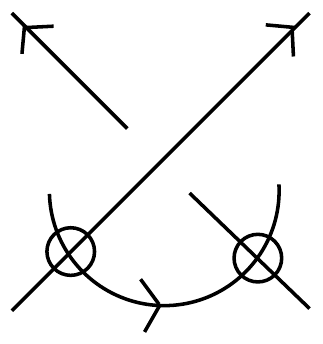}}\, \right >.
\end{eqnarray*}
Here, we used that the diagrams receiving the weight $-A^{-4}$ have the same evaluations, as the number of loops in the larger diagrams (which are identical except in the neighborhood shown) remain the same when the horizontal arc crosses virtually the top or the bottom two edges of the disoriented resolution. 

Lastly, the invariance of $\left < \, \cdot\, \right >$ under the move $V_{3-s}$ follows from the skein relation~\eqref{eq:sing-crossing} for the singular crossing and the now known invariance of $\left < \, \cdot \, \right >$ under the move $V_{3-c}$:
\[
 \left < \, \raisebox{-.4cm}{\includegraphics[height=.4in]{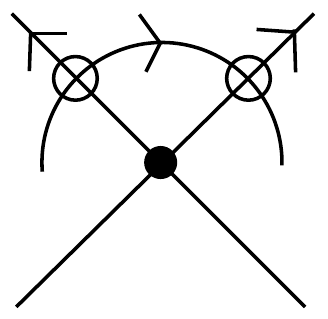}} \,\right >  = \left < \,\raisebox{-.4cm}{\includegraphics[height=.4in]{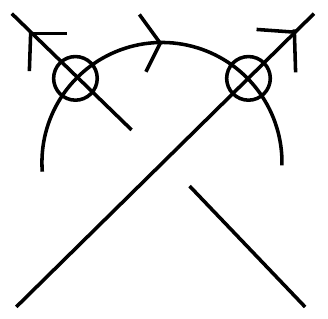}} \, \right > +  \left < \, \raisebox{-.4cm}{\includegraphics[height=.4in]{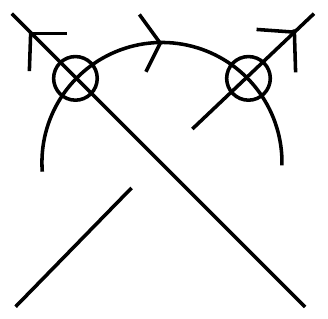}} \, \right >
 = \left < \, \raisebox{-.4cm}{\includegraphics[height=.4in]{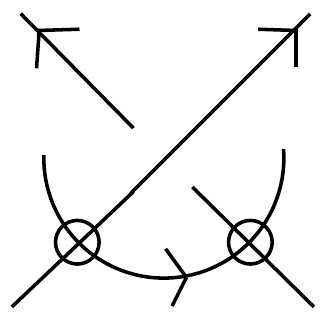}} \, \right > +  \left < \,\raisebox{-.4cm}{\includegraphics[height=.4in]{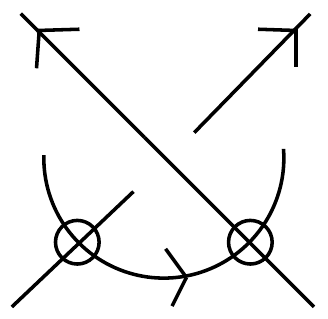}} \, \right >  
= \left < \, \raisebox{-.4cm}{\includegraphics[height=.4in]{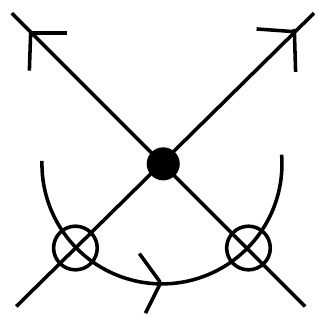}} \, \right > . 
\]
This completes the proof.
\end{proof}

With Theorem~\ref{thm:inv} at hand, we can now define the \textit{Kauffman-Jones polynomial of a virtual singular link $L$} as follows:
\[ \left <\,L\,\right > : = \left <D \right>, \]
where $D$ is any diagram representing $L$.

\begin{remark}
If $L$ is an oriented virtual link (that is, $L$ contains no singular crossings), our polynomial $\left < \, L \, \right >$ is the same as the polynomial $f_L(A)$ introduced in L. Kauffman's work~\cite{Ka2}. Moreover, if $L$ is an oriented classical link, then $\left < \, L \, \right >$ is the polynomial $f[L] (A)$ constructed by L. Kauffman in~\cite{Ka1}, which provides a state model for the Jones polynomial~\cite{Jo}.
\end{remark}

\begin{example}
We evaluate the Kauffman-Jones polynomial of an oriented figure-eight knot containing one classical crossing, one singular crossing, and two virtual crossings. We begin by applying the defining skein relation for the singular crossing followed by that for the classical negative crossing, as shown below:
\begin{eqnarray*}
 \left < \, \raisebox{-.4cm}{\includegraphics[height=.4 in]{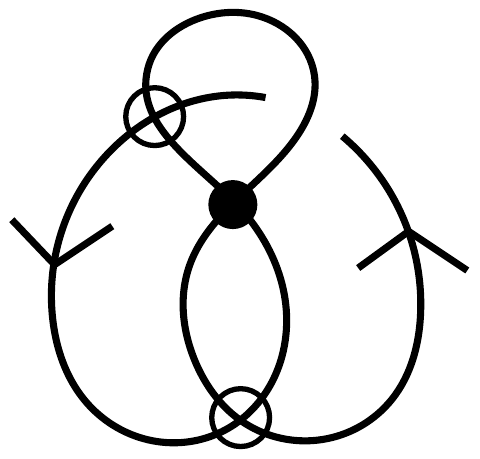}}\, \right > &=& (-A^2-A^{-2}) \left < \, \raisebox{-.4cm}{\includegraphics[height=.4 in]{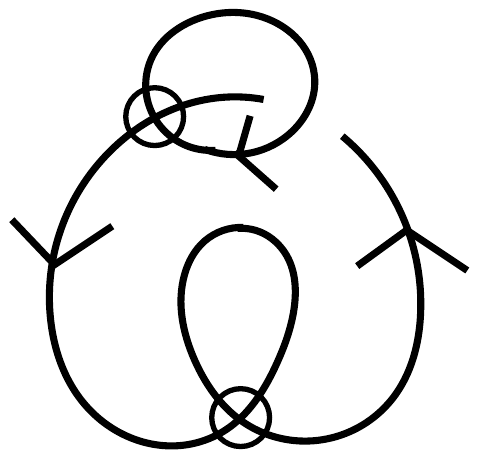}}\, \right > +(-A^4-A^{-4}) \left < \, \raisebox{-.4cm}{\includegraphics[height=.4 in]{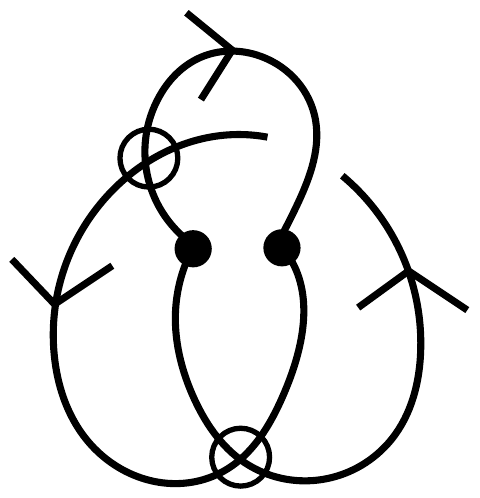}}\, \right > \\
&=& (-A^2-A^{-2}) \left[  -A^2 \left < \, \raisebox{-.4cm}{\includegraphics[height=.4 in]{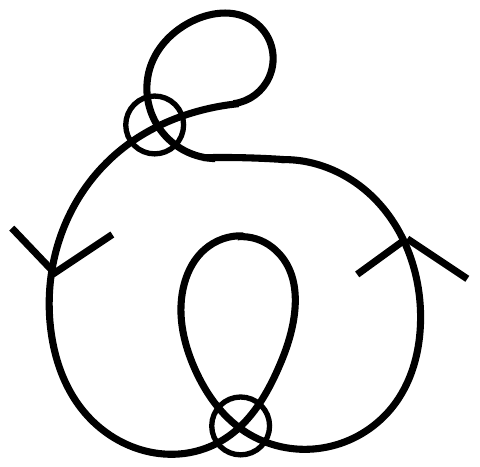}}\, \right > -A^4 \left < \, \raisebox{-.4cm}{\includegraphics[height=.4 in]{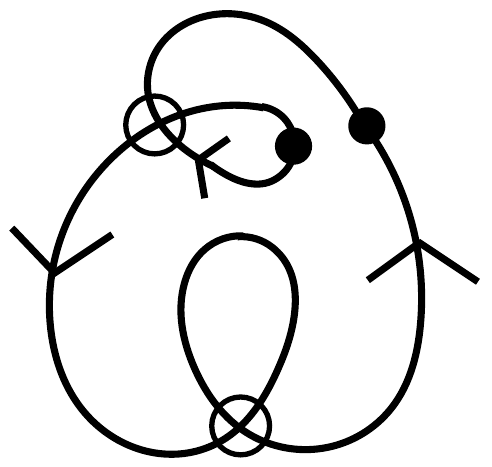}}\, \right > \right] \\
&& + (-A^4-A^{-4}) \left[  -A^2 \left < \, \raisebox{-.4cm}{\includegraphics[height=.4 in]{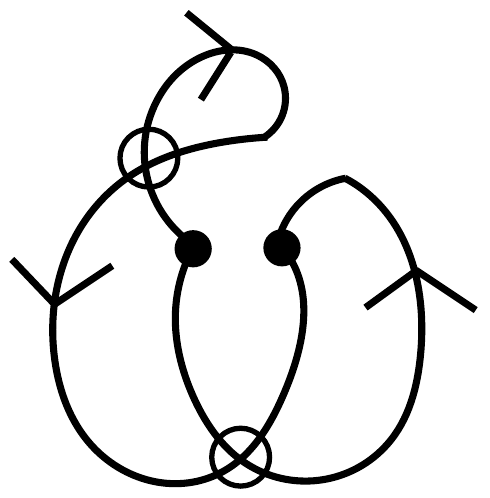}}\, \right > -A^4 \left < \, \raisebox{-.4cm}{\includegraphics[height=.4 in]{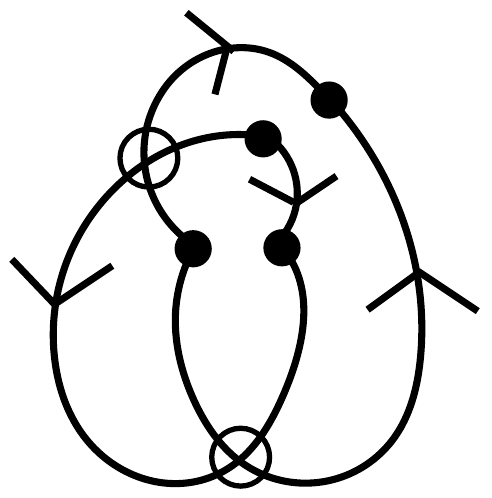}}\, \right > \right] \\
&=& (-A^2-A^{-2})[-A^2(-A^2-A^{-2}) -A^4(-A^2-A^{-2})] \\
&&+ (-A^4-A^{-4})[ -A^2(-A^2-A^{-2}) -A^4(-A^2-A^{-2})^2 ] \\
&=& (-A^2-A^{-2})(A^4+1+A^6+A^2) + (-A^4-A^{-4})(-A^8-A^4) \\
&=& A^{12}-A^6-A^4-2A^2-A^{-2}.
\end{eqnarray*}
\end{example}

We end this section by giving a state sum formula for the Kauffman-Jones polynomial of a virtual singular link. Before we do so, we need to go over some new notation. 

Consider a virtual Kauffman-Jones state $S$ obtained from a virtual singular link diagram $D$. Let $a(S)$ denote the number of negative classical crossings in $D$ that received an oriented resolution to obtain the state $S$ minus the number of positive classical crossings that received an oriented resolution to obtain $S$. Similarly, let $b(S)$ be the number of negative classical crossings in $D$ that received a disoriented resolution to obtain $S$ minus the number of positive crossings that received a disoriented resolution to obtain the state $S$. In addition, let $\alpha(S)$ denote the number of singular crossings in $D$ that received an oriented resolution, and let $\beta(S)$ denote the number of singular crossings that received a disoriented resolution to result in the state $S$. Finally, denote by $|| S ||$ the number of components in $S$ (immersed closed curves with self intersections represented as virtual crossings), and let $c(D)$ represent the number of classical crossings in $D$. 

With these definitions at hand together with the skein relations given in Figures~\ref{fig:extended-jones} and~\ref{fig:graph-skein-rel} (which define our polynomial), we obtain the following state sum formula for the Kauffman-Jones polynomial of a virtual singular link diagram $D$: 
\begin{eqnarray} \label{state_sum}
\hspace{1cm}  \left < \, D \, \right >  =  (-1)^{c(D)}\sum_{S} A^{2a(S)+4b(S)}(-A^2-A^{-2})^{\alpha(S)+|| S ||}\,(-A^4-A^{-4})^{\beta(S)}, 
\end{eqnarray} 
where the sum runs over all states $S$ associated with $D$.


\section{Another approach to the Kauffman-Jones polynomial}\label{sec:second-poly}

In this section, we take another approach to evaluating the Kauffman-Jones polynomial of a virtual singular link, which is motivated  by the work of N. Kamada and Y. Miyazawa in \cite{KM}. With this new perspective, we show that the Kauffman-Jones polynomial of a virtual singular link decomposes non-trivially into two parts, one belonging to $\mathbb{Z}[A^4,A^{-4}]$ and the other to $\mathbb{Z}[A^4,A^{-4}] \cdot A^2$. 

Given a virtual singular link diagram $D$, we still resolve the singular and classical crossings in $D$ using the skein relations given in Figure~\ref{fig:extended-jones}, but we use a slightly different method for evaluating the resulting Kauffman-Jones states (recall that these are purely virtual magnetic graphs).

Consider a purely virtual magnetic graph $S$. Let $E(S)$ denote the set of edges of $S$, where an edge is an arc between two adjacent bivalent vertices in $S$. An immersed closed curve with no bivalent vertices is considered to be an edge. 

\begin{definition}
A \textit{weight map} of a purely virtual magnetic graph $S$ is a function $\tau: E(S) \rightarrow \{1,-1\}$ such that adjacent edges $e$ and $e^\prime$ are assigned different values, that is, $\tau(e) \neq \tau(e^\prime)$. The assigned value to an edge $e$ is called the \textit{weight of $e$ with respect to $\tau$}. Given a weight map $\tau$ and a virtual crossing $v$ in $S$, the \textit{parity of $v$ with respect to $\tau$}, denoted $i_{\tau}(v)$, is defined as follows:
\[ i_{\tau}(v) = \tau(e) \tau(e^\prime), \]
where $e$ and $e^\prime$ are the two edges meeting at $v$. 
The \textit{parity of $S$} (with respect to $\tau$), denoted by $i_\tau(S)$, is defined as the product of the parities of all virtual crossings in $S$. That is,
\[ i_\tau(S) = \prod_{v\in S}    i_{\tau}(v).\]
\end{definition}

We remark that an immersed curve $S$ without bivalent vertices has parity 1.

As we now show, the parity of a purely virtual magnetic graph $S$ is independent of the choice of the weight map $\tau$, and consequently, we will use the notation $i(S)$ instead of $i_{\tau}(S)$. The proof of the following statement was given in~\cite{KM}, and we provide it here in order to have a self contained paper.

\begin{lemma}
Let $S$ be a purely virtual magnetic graph. Then, the parity of $S$ is independent on the choice of the weight map.
\end{lemma}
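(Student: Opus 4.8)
The plan is to show that any two weight maps $\tau$ and $\tau'$ on $S$ give the same parity by reducing to the case where they differ on a single edge (or on a single connected component of edges). First I would observe that the set of weight maps on a fixed purely virtual magnetic graph $S$ has a very rigid structure: along any path of edges $e_1, e_2, \dots$ joined at bivalent vertices, the values must alternate, so on each connected component $C$ of $S$ the weight map is determined by its value on any one edge of $C$. Hence there are at most $2^{(\#\text{components})}$ weight maps, and any two of them differ by flipping the sign on all edges of some subcollection of components. It therefore suffices to prove that flipping the weight map on a single connected component $C$ of $S$ leaves $i(S)$ unchanged.

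The key computation is then to track what happens to $i_\tau(v) = \tau(e)\tau(e')$ at each virtual crossing $v$ when we replace $\tau$ by the map $\tau'$ that agrees with $\tau$ outside $C$ and is negated on every edge of $C$. There are two kinds of virtual crossings: those where both strands $e, e'$ belong to $C$, and those where exactly one of $e, e'$ belongs to $C$ (a crossing with neither strand in $C$ is clearly unaffected). If both $e$ and $e'$ lie in $C$, then $\tau'(e)\tau'(e') = (-\tau(e))(-\tau(e')) = \tau(e)\tau(e')$, so that crossing's contribution is unchanged. If exactly one strand, say $e$, lies in $C$, then $\tau'(e)\tau'(e') = -\tau(e)\tau(e')$, so that crossing's contribution flips sign. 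Thus $i_{\tau'}(S) = (-1)^{m}\, i_\tau(S)$, where $m$ is the number of virtual crossings at which $C$ meets the rest of $S$ transversally exactly once, i.e. the number of crossings between $C$ and $S \setminus C$.

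The remaining point — and I expect this to be the only real obstacle — is to argue that $m$ is always even. This is a parity/Jordan-curve argument: the component $C$, forgetting the bivalent vertices and regarding its virtual self-crossings as ordinary double points, is an immersed closed curve in the plane, so as an element of $H_1$ it bounds, and any other closed curve (in particular each component of $S\setminus C$, or more carefully each arc of it) must cross it an even number of times; summing over all such crossings shows $m$ is even. Concretely, one can push this down to: for a generic closed curve $\gamma$ in the plane and any other generic closed curve $\delta$, the algebraic (hence mod-$2$) intersection number is $0$, because $\mathbb{R}^2$ is simply connected. Applying this with $\gamma = C$ and $\delta$ ranging over the components of $S \setminus C$ gives $m \equiv 0 \pmod 2$, hence $i_{\tau'}(S) = i_\tau(S)$. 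Combining with the reduction in the first paragraph — any two weight maps differ by a sequence of single-component flips — we conclude $i_\tau(S)$ is independent of $\tau$. I would write the mod-$2$ intersection step carefully, since it is where the planarity of the diagram is actually used; everything else is bookkeeping with the alternating-sign condition.
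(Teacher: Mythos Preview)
Your proof is correct and essentially identical to the paper's: both observe that a weight map on a component is determined up to a global sign, that flipping signs on a set of components changes $i_\tau(v)$ precisely at the virtual crossings where exactly one strand lies in that set, and that such crossings occur in even number because two closed curves in the plane meet in an even number of points. The only cosmetic difference is that you reduce to flipping one component at a time, whereas the paper treats an arbitrary pair of weight maps directly by partitioning the components into those on which the two maps agree and those on which they differ.
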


\begin{proof} 
Let $\tau$ and $\omega$ be two weight maps of $S$. By definition, the only possible values assigned to the edges of $S$ are $1$ or $-1$. We consider a partition of $E(S)$ into two subsets. Let $E_1(S)$ be the set of all edges $e \in E(S)$ such that $\tau(e)=\omega(e)$, and $E_2(S)$ be the set of all edges such that $\tau(e) \neq \omega(e)$. Note that $\tau(e) \neq \omega(e)$ is equivalent to $\tau(e)= - \omega(e)$. Consider the components of $S$ (that is, immersed closed curves in $S$). All of the edges of a component of $S$ will either belong to $E_1(S)$ or $E_2(S)$. To see this, consider an edge $e$ of a component of $S$. Suppose that $e \in E_1(S)$, so $\tau(e)=\omega(e)$, and consider an adjacent edge $e^\prime$. If $\tau(e)=\omega(e)=a$, then by definition of a purely virtual magnetic graph, we have $\tau(e^\prime)=\omega(e^\prime)=-a$, and thus $e^\prime \in E_1(S)$. We repeat this argument as we move through the edges of the component of $S$, to conclude that all of its edges belong to $E_1(S)$. The same argument works for the case when $e \in E_2(S)$. Thus all of the edges of a component will either belong to $E_1(S)$ or $E_2(S)$. 

Let $S_i$ denote the components of $S$ whose edges belong to $E_i(S)$, for $i=1,2$. Consider a virtual crossing $v$ of $S$. The parity of $v$ with respect to $\tau$ will be different from the parity of $v$ with respect to $\omega$ if and only if $v$ is a crossing between an edge from $S_1$ and an edge from $S_2$. However, two components of $S$ will always intersect in an even number of virtual crossings, and therefore, the product of the parities of the virtual crossings that could be different (with respect to $\tau$ and $\omega$) will be equal to $1$, since there is an even number of such virtual crossings. Hence, $i_\tau(S) = i_\omega(S)$.
\end{proof}

\begin{definition}
An \textit{enhanced purely virtual magnetic graph} is a purely virtual magnetic graph $S$ together with a weight map of $S$.
\end{definition}

We are now ready to define the new method for evaluating an enhanced purely virtual magnetic graph. As before, we denote by $|| S ||$ the number of components in $S$.

\begin{definition}\label{def:enhanced-mg}
Given an enhanced purely magnetic graph $S$ with parity $i(S)$, the \textit{enhanced evaluation} of $S$, denoted $R(S)$, is given by:
\begin{eqnarray*}\label{eq:enhanced-eval}
 R(S) =  (-A^2-A^{-2})^{|| S ||}h^{\frac{1-i(S)}{2}}
 \end{eqnarray*}
 where $A, A^{-1}$ and $h$ are commuting parameters.
\end{definition}

\begin{remark}
If we substitute $h =1$ in the definition of the enhanced evaluation of $S$, we obtain the evaluation of $S$, as defined before.
\end{remark}

\begin{example} Consider the following enhanced purely magnetic graph $S$ given below:

\[\raisebox{30pt}{S =} \hspace{0.5cm} \includegraphics[height=1.2in]{VMGstate_ex} \put(-5, 5){\fontsize{9}{11}$1$} \put(-40, -2){\fontsize{9}{11}$-1$} \put(-50, 5){\fontsize{9}{11}$1$} \put(-70, 1){\fontsize{9}{11}$1$} \put(-90, 45){\fontsize{9}{11}$-1$}  \put(-81, 75){\fontsize{9}{11}$-1$}  \put(-22, 60){\fontsize{9}{11}$1$}
 \]
There are four components in $S$, and thus $||S||=4$. The virtual crossing in the top component has parity 1, and both of the virtual crossings in the bottom right component have parity $-1$. Thus $i(S)=(1)(-1)(-1)=1$ and the enhanced evaluation of this state is $R(S) = (-A^2-A^{-2})^4 \, h^{\frac{1-1}{2}}=(-A^2-A^{-2})^4.$
\end{example}

Consider a virtual singular link diagram $D$ and its associated Kauffman-Jones states together with a weight map for each of the states, which now are \textit{enhanced Kauffman-Jones states}. We employ the symbols used in the state sum formula given in Equation~\eqref{state_sum}, to define a new polynomial associated to $D$.

\begin{definition}\label{def:new-poly}
Given a virtual singular link diagram $D$, we define a polynomial $R(D) \in \mathbb{Z}[A,A^{-1},h]$ as the following sum, taken over all enhanced Kauffman-Jones states $S$ associated with $D$:
\[
R(D)=   (-1)^{c(D)} \sum_S A^{2a(S)+4b(S)}(-A^2-A^{-2})^{\alpha(S)+|| S ||}(-A^4-A^{-4})^{\beta(S)}h^{\frac{1-i(S)}{2}}.
\]
\end{definition}

\begin{remark}
Comparing the defining formula for $R(D)$ with the state sum formula for the Kauffman-Jones polynomial given in Equation~\eqref{state_sum}, we see that $R(D)_{|h = 1}  = \left < \,D\, \right >$. The role of the indeterminate $h$ in our construction will become clear in Section~\ref{sec:split}, where we address a splitting property of the polynomial $R(D)$, and consequently, of the polynomial $\left < \,D\, \right >$.
\end{remark}

\begin{remark}
Given a virtual singular link diagram $D$ and $S$ an enhanced  Kauffman-Jones state of $D$, denote by $\OR(S)$ the \textit{weighted state contribution} of $S$ to $R(D)$:
\[ \OR(S):= A^{2a(S)+4b(S)}(-A^2-A^{-2})^{\alpha(S)+|| S ||}(-A^4-A^{-4})^{\beta(S)}h^{\frac{1-i(S)}{2}} .\]
Then the polynomial $R(D)$ can be rewritten as:
\[ R(D) =  (-1)^{c(D)} \sum_S \OR(S). \]
\end{remark}

\begin{proposition}\label{prop:skeinR}
Given a virtual singular link diagram $D$, the polynomial $R(D)$ can be computed iteratively via the skein relations,

 \[ R \left( \, \raisebox{-10pt}{\includegraphics[height=.35in]{opos}}\, \right) = -A^{-2} R \left ( \,  \raisebox{-10pt}{\includegraphics[height=.35in]{oriented}} \, \right) -  A^{-4} R \left( \,  \raisebox{-10pt}{\includegraphics[height=.35in]{unoriented}} \, \right) \] 

 \[ R \left ( \, \raisebox{-10pt}{\includegraphics[height=.35in]{oneg}}\, \right )= -A^{2} R \left ( \, \raisebox{-10pt}{\includegraphics[height=.35in]{oriented}} \, \right) -  A^{4} R \left ( \, \raisebox{-10pt}{\includegraphics[height=.35in]{unoriented}} \, \right) \] 

\[ 
R \left ( \, \raisebox{-10pt}{\includegraphics[height=.35in]{osingular}} \,\right ) = (-A^{2}-A^{-2}) R \left ( \, \raisebox{-10pt}{\includegraphics[height=.35in]{oriented}}\, \right ) + (-A^4-A^{-4}) R\left ( \, \raisebox{-10pt}{\includegraphics[height=.35in]{unoriented}}\, \right )
\]
followed by the application of Definition~\ref{def:enhanced-mg}, to evaluate the resulting enhanced Kauffman-Jones states corresponding to $D$.
\end{proposition}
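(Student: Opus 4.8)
The plan is to show that the three displayed skein relations, together with Definition~\ref{def:enhanced-mg}, produce exactly the polynomial $R(D)$ of Definition~\ref{def:new-poly}. Since $R(D)$ is defined as a state sum over enhanced Kauffman-Jones states, and each classical or singular crossing is resolved in one of two ways independently of the others, the natural approach is a straightforward induction on the number of classical and singular crossings of $D$. First I would fix a crossing $x$ of $D$ (classical positive, classical negative, or singular) and let $D_o$ and $D_d$ denote the two diagrams obtained from $D$ by replacing $x$ with its oriented and disoriented resolutions respectively; these each have one fewer classical-or-singular crossing than $D$. The set of enhanced states of $D$ is the disjoint union of the enhanced states of $D_o$ and those of $D_d$ (a weight map on the resolved diagram extends to the same state data, since resolving a crossing does not create or destroy virtual crossings), so $R(D)$ splits as a sum of two contributions. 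The task is to identify those two contributions, up to the appropriate coefficient, with $R(D_o)$ and $R(D_d)$.

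The key bookkeeping step is to compare the exponents $a(S), b(S), \alpha(S), \beta(S)$ and the counts $c(D), \|S\|, i(S)$ for a state $S$ viewed as a state of $D$ versus as a state of $D_o$ or $D_d$. For a positive classical crossing $x$: if $S$ comes from the oriented resolution then, relative to $D_o$, the quantity $a(S)$ picks up an extra $-1$ (one more positive crossing received an oriented resolution) while $b(S), \alpha(S), \beta(S), \|S\|, i(S)$ are unchanged, and $c(D) = c(D_o)+1$; so $\OR_D(S) = A^{-2}\,\OR_{D_o}(S)$ and the sign $(-1)^{c(D)} = -(-1)^{c(D_o)}$ combine to give the coefficient $-A^{-2}$. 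If instead $S$ comes from the disoriented resolution, $b(S)$ picks up an extra $-1$ relative to $D_d$, giving a factor $A^{-4}$, and again the sign flips, yielding $-A^{-4}$. The negative-crossing case is identical with $A^{-2}, A^{-4}$ replaced by $A^{2}, A^{4}$, coming from the fact that for a negative crossing neither $a$ nor $b$ changes but $c(D)$ still drops by one, so the whole factor $A^{2a+4b}$ is unaffected while the power of $A$ contributed must instead be read off directly — here I would double-check the definitions of $a(S)$ and $b(S)$, which count \emph{negative} crossings minus positive crossings receiving each resolution, so that a negative crossing contributes to $a$ or $b$ for \emph{every} state and its removal from $D$ shifts $a$ or $b$ by $+1$, producing the factors $A^{-2}$ or $A^{-4}$ that, after the sign flip, give $-A^{2}$ and $-A^{4}$ — wait, this needs care, and this bookkeeping is exactly the main obstacle.

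The most delicate point, and the one I expect to be the real obstacle, is getting these sign and exponent shifts exactly right for all three crossing types simultaneously, because the definitions of $a(S)$ and $b(S)$ mix a count depending only on $D$ (number of negative crossings) with a count depending on the state, and because $\alpha(S), \beta(S)$ for singular crossings enter through the factor $(-A^2-A^{-2})^{\alpha(S)+\|S\|}(-A^4-A^{-4})^{\beta(S)}$ rather than through a power of $A$. For the singular crossing: resolving it the oriented way decreases $\alpha(S)$ by one relative to $D_o$, so $\OR_D(S)$ gains a factor $(-A^2-A^{-2})$ relative to $\OR_{D_o}(S)$, and since a singular crossing is not a classical crossing, $c(D) = c(D_o)$ and there is \emph{no} sign flip, giving precisely the coefficient $(-A^2-A^{-2})$; resolving it the disoriented way decreases $\beta(S)$ by one, giving the factor $(-A^4-A^{-4})$ with no sign change. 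In every case the parity factor $h^{(1-i(S))/2}$ and the loop factor coming from $\|S\|$ are untouched by a resolution, since resolving a crossing of $D$ changes neither the virtual crossings nor (for a fixed state) the number of components. Once these three computations are carried out, the base case — a diagram with no classical or singular crossings, which is already a purely virtual magnetic graph and whose $R$-value is $(-A^2-A^{-2})^{\|S\|}h^{(1-i(S))/2}$ by Definition~\ref{def:enhanced-mg} — completes the induction, and the same argument applies verbatim with $h=1$ to recover the iterative computation of $\left< D \right>$.
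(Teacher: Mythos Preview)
Your induction is correct and is exactly the computation the paper's one-line proof leaves implicit: the paper simply observes that $R(D)$ and $\langle D\rangle$ share the same state-sum structure except for the factor $h^{(1-i(S))/2}$, which depends only on the virtual crossings of the state and is therefore carried along unchanged when a classical or singular crossing is resolved, so the skein relations for $\langle D\rangle$ lift to $R$ verbatim.

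The only wobble in your write-up is the negative-crossing bookkeeping, and it is purely a matter of direction. With the intended reading of $a(S)$ and $b(S)$ (for each resolution type, the number of \emph{negative} classical crossings receiving that resolution minus the number of \emph{positive} ones receiving it), a negative crossing $x$ taken with the oriented resolution satisfies $a_D(S)=a_{D_o}(S)+1$, hence $\OR_D(S)=A^{2}\,\OR_{D_o}(S)$; combined with $(-1)^{c(D)}=-(-1)^{c(D_o)}$ this yields the coefficient $-A^{2}$ directly, and the disoriented case gives $-A^{4}$ in the same way. There is no mechanism by which a sign flip converts $A^{-2}$ into $A^{2}$ --- the factor $A^{2}$ (respectively $A^{4}$) comes straight from the shift in $a$ (respectively $b$), and the $(-1)$ comes from $c(D)$. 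Once that is straightened out, your positive, negative, and singular cases are all correct, and the base case is immediate from Definition~\ref{def:enhanced-mg} as you say.
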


\begin{proof}
The statement follows easily by comparing the polynomials $\left < \, D \, \right>$ and $R(D)$.
\end{proof}

\begin{proposition} \label{prop:enhanced-graph-rel}
The enhanced evaluation $R$ for enhanced purely virtual magnetic graphs satisfies the following skein relations:
\[
R \left ( \raisebox{-.2cm}{\includegraphics[height=.2in]{rsmooth}} \right ) = R \left ( \raisebox{-.15cm}{\includegraphics[height=.2in]{lcurve}} \right )  \hspace{1cm} R \left ( \reflectbox{\raisebox{-.2cm}{\includegraphics[height=.2in]{rsmooth}}} \right ) = R \left (\reflectbox{ \raisebox{-.15cm}{\includegraphics[height=.2in]{lcurve}} }\right )
\]
\[ R \left ( \raisebox{-.15cm}{\includegraphics[height=.2in]{pspin}} \right ) = -A^2-A^{-2} = R \left ( \raisebox{-.2cm}{\includegraphics[height=.2in]{nspin}} \right )
\]
\[
R  \left ( \Gamma \cup \raisebox{-.15cm}{\includegraphics[height=.2in]{pspin}} \right ) = (-A^2 -A^{-2} ) R \left ( \Gamma \right ) = R \left ( \Gamma \cup \raisebox{-.15cm}{\includegraphics[height=.2in]{nspin}} \right ).
\]
Moreover, the parity for enhanced purely virtual magnetic graphs satisfies the following skein relations:
 \[
i \left ( \, \raisebox{-.3cm}{\includegraphics[height=.3in]{vv11}} \, \right ) =  - i \left ( \, \raisebox{-.3cm}{\includegraphics[height=.3in]{vv12}} \, \right )  \hspace{1cm} i \left ( \, \raisebox{-.3cm}{\includegraphics[height=.3in]{vv21}} \, \right ) = - i \left (  \, \raisebox{-.3cm}{\includegraphics[height=.3in]{vv22}} \, \right ).
\]
\end{proposition}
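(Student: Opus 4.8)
The plan is to verify each group of skein relations separately, in each case by exhibiting an explicit weight map (or a pair of compatible weight maps) on the graphs appearing on the two sides and then comparing the parities and the number of components. First I would handle the three relations involving $R$ directly: the curve-swap relations, the evaluation of a spin curve, and the multiplicativity under disjoint union. Since $R(S) = (-A^2-A^{-2})^{\|S\|}h^{(1-i(S))/2}$, it suffices to check that the two sides have the same number of components and the same parity. For the first relation, removing a pair of oppositely oriented bivalent vertices (the ``$\mathrm{rsmooth}$'' versus ``$\mathrm{lcurve}$'' local pictures) does not change $\|S\|$ — it is a planar isotopy of the underlying curve — and it does not create or destroy any virtual crossing, so $i(S)$ is unchanged; hence $R$ is unchanged. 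The mirrored version is identical by symmetry. For the spin curve, the local picture is a single closed loop with no bivalent vertices, which by the remark following the definition has parity $1$; as a disjoint closed curve it contributes one to $\|S\|$, giving $R = (-A^2-A^{-2})^1 h^0 = -A^2-A^{-2}$, and the two orientations of the loop give the same value. Multiplicativity under disjoint union follows because $\|\Gamma \cup (\text{loop})\| = \|\Gamma\| + 1$ and adjoining a disjoint loop contributes parity $1$ to the product defining $i$, so the parity of $\Gamma$ is unaffected; therefore $R(\Gamma \cup \text{loop}) = (-A^2-A^{-2}) R(\Gamma)$, again independent of the loop's orientation.

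Next I would prove the two parity relations. Here the two local pictures in each relation differ by a single pure virtual move that \emph{slides a strand across a bivalent vertex} (the ``$\mathrm{vv}$'' pictures), and the assertion is that this flips the overall parity by a sign. Fix a weight map $\tau$ on the configuration on one side; I would extend it to a weight map on the configuration on the other side so that $\tau$ agrees on every edge away from the local picture. The point is that moving a strand past a vertex changes which edge of the vertex-carrying component the sliding strand crosses: before the move the sliding strand meets one of the two edges incident to that vertex, and after the move it meets the other. Since those two edges are adjacent at a bivalent vertex, they carry opposite weights under $\tau$, so the parity $i_\tau(v) = \tau(e)\tau(e')$ of that one virtual crossing changes sign while every other virtual crossing keeps its parity; hence the global parity $i_\tau$ changes sign. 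By the preceding Lemma the global parity is independent of the weight map, so this establishes $i(\cdot) = -i(\cdot)$ for the two pictures, as claimed. The second parity relation is the same argument with the roles of the strands interchanged.

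The step I expect to be the main obstacle is the bookkeeping in the parity relations: one must be careful that the local replacement really does change exactly one virtual crossing (not zero, and not two), and that the two edges in question are genuinely adjacent — i.e. that the strand being slid passes between the vertex and one of its neighbouring vertices rather than straddling the vertex in a way that involves both incident edges simultaneously. I would address this by drawing the two ``$\mathrm{vv}$'' configurations with their edges labelled and the orientations of the alternating edges marked, and reading off directly that the sliding arc crosses the source/sink vertex's left edge on one side and its right edge on the other, with all other crossings untouched. Everything else reduces to the observations that the underlying closed-curve count $\|S\|$ and the set of virtual crossings are invariant under the moves in question, together with the Lemma guaranteeing well-definedness of $i(S)$; no genuine computation is needed beyond that.
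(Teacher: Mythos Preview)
Your proposal is correct and matches the paper's approach: the paper's proof is the single sentence ``These skein relations follow at once from Definition~\ref{def:enhanced-mg} and the definition of the parity of enhanced purely virtual magnetic graphs,'' and what you have written is exactly the unpacking of that sentence. Your treatment of the parity relations --- observing that sliding the transverse strand past the bivalent vertex replaces one incident edge by the adjacent one, hence flips the weight entering $i_\tau(v)$ at that single virtual crossing --- is precisely the intended argument.
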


\begin{proof}
These skein relations follow at once from Definition~\ref{def:enhanced-mg} and the definition of the parity of enhanced purely virtual magnetic graphs.
\end{proof}

\begin{theorem}\label{thm:invR}
The polynomial $R(\, \cdot \,)$ is an ambient isotopy invariant for virtual singular links. 
\end{theorem}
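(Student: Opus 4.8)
The plan is to mimic the proof of Theorem~\ref{thm:inv}, replacing the evaluation of Kauffman-Jones states by the enhanced evaluation $R$ and keeping careful track of the extra bookkeeping parameter $h$. The key point is that Proposition~\ref{prop:skeinR} gives us exactly the same skein relations for $R(\,\cdot\,)$ at classical and singular crossings as we had for $\left<\,\cdot\,\right>$, and Proposition~\ref{prop:enhanced-graph-rel} gives us the analogues of the graph skein relations that were used in the proof of Theorem~\ref{thm:inv}. So the verification of invariance under $R_1, R_2, R_3$ proceeds verbatim: every manipulation in the proof of Theorem~\ref{thm:inv} either applies a crossing skein relation (now justified by Proposition~\ref{prop:skeinR}) or simplifies a purely virtual magnetic graph using a graph skein relation (now justified by Proposition~\ref{prop:enhanced-graph-rel}); since the $h$-power is determined by the parity $i(S)$, which only depends on the underlying graph up to the pure virtual moves, the same cancellations occur. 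Likewise, invariance under $RS_1$, $RS_2$, $V_{3-c}$, $V_{3-s}$ follows by the same arguments, using the $R$-analogue of the auxiliary skein relation \eqref{eq:sing-crossing} (obtained immediately by adding the first two relations of Proposition~\ref{prop:skeinR}).

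First I would restate that invariance under the purely virtual moves $V_1$, $V_2$, $V_{3-v}$ is built into Definition~\ref{def:enhanced-mg}: these moves do not change $\|S\|$ and, by the Lemma and the discussion of parity, do not change $i(S)$, hence $R$ is unchanged. Then I would handle $R_1$, $R_2$ (both orientations), and the one representative version of $R_3$ exactly as in Theorem~\ref{thm:inv}, pointing out that the reductions $\langle\raisebox{-.2cm}{\includegraphics[height=.2in]{rsmooth}}\rangle=\langle\raisebox{-.15cm}{\includegraphics[height=.2in]{lcurve}}\rangle$ etc. are replaced by their $R$-versions from Proposition~\ref{prop:enhanced-graph-rel}, and that a small closed loop contributes the factor $-A^2-A^{-2}$ with trivial $h$-exponent. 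For $R_3$ I would again reduce, via $R_2$-invariance, to the identity \eqref{R3-proof} with $\left<\,\cdot\,\right>$ replaced by $R(\,\cdot\,)$, then expand both sides by the crossing skein relations and match the four resulting enhanced states on each side by planar isotopy and the pure virtual moves. Finally $RS_1$, $RS_2$, $V_{3-c}$, $V_{3-s}$ go through word for word.

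The one genuinely new issue — the part I expect to require the most care — is that the $h$-exponent is a \emph{global} quantity: $i(S)$ is the product of parities over \emph{all} virtual crossings of the whole state, not just those appearing in the local tangle shown in a skein relation. So I must check that each local move used in the proof changes $\tfrac{1-i(S)}{2}$ in the same way on both sides of every equation, i.e. that whenever we rewrite a local picture via a graph skein relation or a pure virtual move, the parity of the ambient state is preserved (or changes identically on matched terms). For the moves $\langle\raisebox{-.2cm}{\includegraphics[height=.2in]{rsmooth}}\rangle=\langle\raisebox{-.15cm}{\includegraphics[height=.2in]{lcurve}}\rangle$ and the pure virtual moves, Proposition~\ref{prop:enhanced-graph-rel} and the parity skein relations $i\left(\raisebox{-.3cm}{\includegraphics[height=.3in]{vv11}}\right)=-\,i\left(\raisebox{-.3cm}{\includegraphics[height=.3in]{vv12}}\right)$ record precisely the needed local behavior; combined with the choice of a compatible weight map (which exists by the same argument as in the Lemma, since on each component the edges alternate), these suffice to see that the matched enhanced states in each displayed identity carry equal $h$-powers. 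I would therefore add a remark, inserted once at the start of the proof, that throughout we may choose weight maps on the states appearing on both sides of each move so that they agree away from the local region, so that all parity changes are localized and are governed exactly by Proposition~\ref{prop:enhanced-graph-rel}; after that the computations are identical to those in the proof of Theorem~\ref{thm:inv}, and I would simply indicate the necessary substitutions rather than rewrite every display.
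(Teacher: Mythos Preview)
Your proposal is correct and follows essentially the same approach as the paper: reduce to the skein relations of Proposition~\ref{prop:skeinR} so that the computations for $R_1,R_2,R_3,RS_1,RS_2$ are identical to those in Theorem~\ref{thm:inv}, and then verify that the parity $i(S)$ is preserved under the moves involving virtual crossings. The paper organizes this slightly differently, treating the five moves $V_1,V_2,V_{3-v},V_{3-c},V_{3-s}$ separately and giving an explicit parity argument for the disoriented resolution in $V_{3-c}$ (the two adjacent edges at a bivalent vertex have opposite weights, so the product of the two virtual-crossing parities is $-1$ on both sides); this is exactly the ``one genuinely new issue'' you identified, and your compatible-weight-map framework handles it.
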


\begin{proof}
We have that $R(D)_{|h = 1} = \left < \,D\, \right >$ and that the two polynomials $\left < \, D \, \right>$ and $R(D)$ differ only by how the Kauffman-Jones states are evaluated.  In particular, the polynomial $R$ satisfies the same skein relations (for the classical and virtual crossings) as the polynomial $\left < \,D\, \right >$. Since in the proof of Theorem~\ref{thm:inv} we used these skein relations to show that $\left < \, D \, \right>$ is invariant under the extended Reidemeister moves, it is clear that the polynomial $R(\, \cdot \,)$ is invariant under the moves that involve only classical and singular crossings. Therefore, it remains to show that $R(\, \cdot \,)$ is unchanged under the extended Reidemeister moves that involve virtual crossings, namely the last five moves given in Figure~\ref{fig:isotopies}.

\textit{Invariance under the move $V_1$.} Consider two virtual singular link diagrams $D$ and $D^{\prime}$ that differ by a virtual move $V_1$:
\[ D =  \raisebox{-.4cm}{\includegraphics[height=.4In]{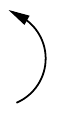}} \hspace{1cm} \text{and}  \hspace{1cm}  D^{\prime} = \raisebox{-.4cm}{\includegraphics[height=.4in]{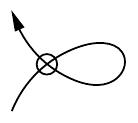}}\]
Each state $S^{\prime}$ of $D^ {\prime}$ can be obtained from a state $S$ of $D$ by applying a virtual twist, where the twist is applied in the same small neighborhood of the point where the twist was introduced on $D$. Since the parity of the virtual crossing involved in the twist is always $1$, regardless of the weight map of the state, it follows that $R(S) = R(S^ {\prime})$ for every corresponding pair of states $S$ and $S^{\prime}$, and therefore $R(D) = R(D^ {\prime})$. 

\textit{Invariance under the move $V_2$.} Consider two virtual singular link diagrams $D$ and $D^{\prime}$ that are identical except near a point where they differ as shown:
\[ D =  \raisebox{-.4cm}{\includegraphics[height=.4In]{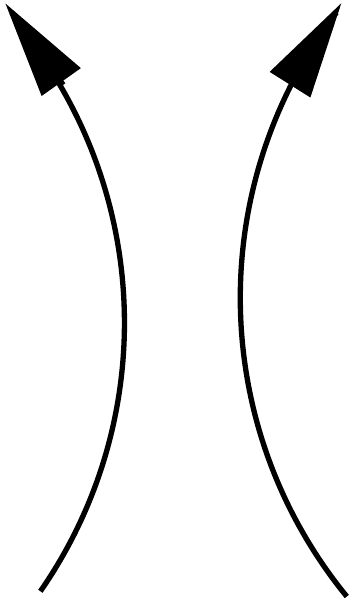}} \hspace{1cm} \text{and}  \hspace{1cm}  D^{\prime} = \raisebox{-.4cm}{\includegraphics[height=.4in]{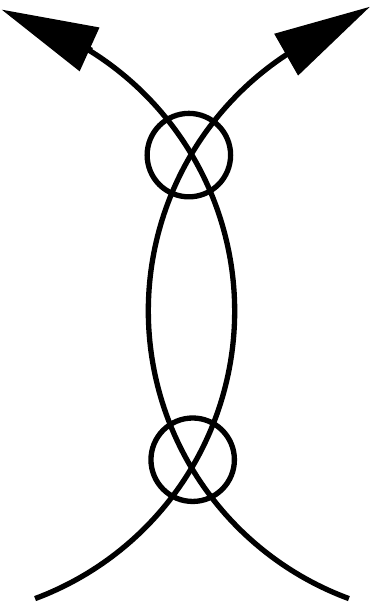}}\]
The states $S$ of $D$ are in one-to-one correspondence with the states $S^{\prime}$ of $D^ {\prime}$, and each corresponding pair $S$ and $S^{\prime}$ differ by the virtual move $V_2$. The parities of the additional two virtual crossings in $S^{\prime}$ are the same, regardless of the weight map of the state, and thus the overall parities of each corresponding pairs $S$ and $S^{\prime}$ are the same. Hence, $R(D) = R(D^{\prime})$.

\textit{Invariance under the move $V_{3-v}$.} Consider again two diagrams that are identical except near a point where they differ as shown below:
\[
D = \, \raisebox{-.4cm}{\includegraphics[height=.4in]{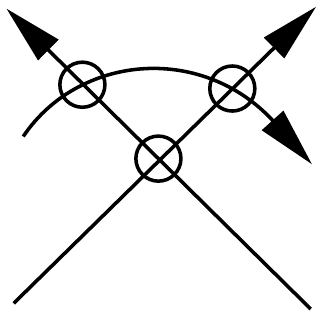}} \hspace{1cm} \text{and} \hspace{1cm}  D^{\prime} =\raisebox{-.4cm}{\includegraphics[height=.4in]{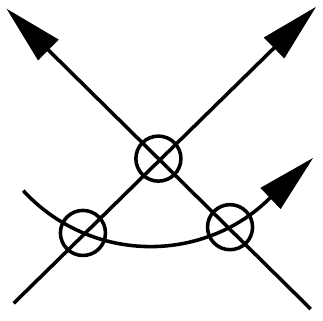}} 
\]
No virtual crossings are added or removed during the move, and the number of components in the resulting states of $D$ and $D^{\prime}$ is unaffected. As before, there is a one-to-one correspondence between the states $S$ of $D$ and $S^{\prime}$ of $D^ {\prime}$, respectively. Each corresponding pair $S$ and $S^{\prime}$ differ by the virtual move $V_{3-v}$.   We need to show that the parities of the three crossings involved in the move are the same when the move is applied, implying that the overall parities of the corresponding states $S$ and $S^{\prime}$ are the same. Given an assigned weight map of a state, the horizontal strand intersecting virtually the two diagonal strands will have the same weight whether it is above the central virtual crossing or below. Moreover, the move $V_{3-v}$ does not affect the weights of the diagonal strands meeting at the central virtual crossing. It follows that $i(S) = i(S^{\prime})$ for all corresponding pairs of states $S$ and $S^{\prime}$, and hence $R(D) = R(D^{\prime})$ in this case as well.

\textit{Invariance under the move $V_{3-r}$.} Let $D$ and $D^{\prime}$ be virtual singular link diagrams that differ only in a small neighborhood as shown below:
\[D = \raisebox{-.4cm}{\includegraphics[height=.4in]{ovc1}} \hspace{.2cm}   \hspace{1cm} D^{\prime} = \raisebox{-.5cm}{\includegraphics[height=.4in]{ovc6}} \]
We have that
\[R(D) =  -A^{-2} R \left( \,\raisebox{-.4cm}{\includegraphics[height=.4in]{ovc2}} \, \right) -A^{-4}  R \left( \,\raisebox{-.4cm}{\includegraphics[height=.4in]{ovc3}} \, \right) \]
and 
\[ R(D^{\prime}) = -A^{-2} R \left( \, \raisebox{-.4cm}{\includegraphics[height=.4in]{ovc4}}\, \right) -A^{-4} R \left(  \,\raisebox{-.4cm}{\includegraphics[height=.4in]{ovc5}} \,\right).  
\] 
The second diagrams in the above two skein relations contain the same number of components, and we need to verify that they have the same parity. Since these two diagrams are identical outside of the neighborhood shown, the only chance of having different parities depends on the parities of the virtual crossings shown. Consider in both of the diagrams the two adjacent edges intersecting virtually with the horizontal arc. By the definition of a weight map, the two adjacent edges will receive opposite weights, and therefore, no matter the weight of the horizontal strand, the two shown virtual crossings in either of the diagrams will have opposite parities. Hence, 
\[ i\left (\,\raisebox{-.4cm}{\includegraphics[height=.4in]{ovc3}}  \, \right) = i\left (\,\raisebox{-.4cm}{\includegraphics[height=.4in]{ovc5}}  \, \right) \Longrightarrow   R\left (\,\raisebox{-.4cm}{\includegraphics[height=.4in]{ovc3}}  \, \right) = R\left (\,\raisebox{-.4cm}{\includegraphics[height=.4in]{ovc5}}  \, \right).  \]
It follows that $R(D) = R(D^{\prime})$. The case when the classical crossing involved in the move is negative follows in a similar fashion.

\textit{Invariance under the move $V_{3-s}$.} We note first that
\[ 
R \left ( \,\raisebox{-10pt}{\includegraphics[height=.35in]{osingular}} \,\right ) = R \left ( \, \raisebox{-10pt}{\includegraphics[height=.35in]{opos}} \,\right ) + R \left ( \,\raisebox{-10pt}{\includegraphics[height=.35in]{oneg}} \,\right ).
\]
Then, the invariance under the move $V_{3-s}$ follows as shown below, where we use that $R$ is invariant under the move $R_{3-c}$.

\begin{eqnarray*}
R \left ( \, \raisebox{-.4cm}{\includegraphics[height=.4in]{osc1}} \,\right )  &=& R \left ( \,\raisebox{-.4cm}{\includegraphics[height=.4in]{osc2}} \, \right ) + R \left ( \, \raisebox{-.4cm}{\includegraphics[height=.4in]{osc3}} \, \right )\\
 &=& R \left ( \, \raisebox{-.4cm}{\includegraphics[height=.4in]{osc4}} \, \right ) +  R \left ( \,\raisebox{-.4cm}{\includegraphics[height=.4in]{osc5}} \, \right )  \\
&=& R \left ( \, \raisebox{-.4cm}{\includegraphics[height=.4in]{osc6}} \, \right ). 
\end{eqnarray*}
This completes the proof.
\end{proof}

\begin{remark}\label{rem:splitting-def}
The definition of the polynomial $R(D)$ implies that
\begin{eqnarray*}
R(D) &=&  (-1)^{c(D)} \left ( \sum_{S, i(S) = -1} \OR(S) +  \sum_{S, i(S) = 1} \OR(S)  \right ) \\
&=& \phi (D) h + \psi(D),
\end{eqnarray*}
where 
\begin{eqnarray*}
\phi(D): &=& (-1)^{c(D)} \sum_{S,i(S) = -1 } A^{2a(S)+4b(S)}(-A^2-A^{-2})^{\alpha(S)+|| S ||}(-A^4-A^{-4})^{\beta(S)}\\
 \psi(D): &=& (-1)^{c(D)} \sum_{S,i(S) = 1 } A^{2a(S)+4b(S)}(-A^2-A^{-2})^{\alpha(S)+|| S ||}(-A^4-A^{-4})^{\beta(S)},
 \end{eqnarray*}
and $\phi(D), \psi(D) \in \mathbb{Z}[A^2, A^{-2}]$.
\end{remark}

\begin{proposition}
If $D$ and $D'$ are virtual singular link diagrams that differ by an extended Reidemeister move, then $\phi(D) = \phi(D^\prime)$ and $\psi(D) = \psi(D^\prime)$. That is, the polynomials $\phi(\, \cdot \,)$ and $\psi(\, \cdot \,)$ are invariants for virtual singular links.
\end{proposition}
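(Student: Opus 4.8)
The plan is to read off the conclusion directly from Theorem~\ref{thm:invR} together with the observation that the indeterminate $h$ enters $R(D)$ only through the exponent $\frac{1-i(S)}{2}$, which takes values in $\{0,1\}$ since $i(S)\in\{1,-1\}$ for every Kauffman-Jones state $S$. First I would note that, regarded as an element of the polynomial ring $\mathbb{Z}[A^2,A^{-2}][h]$, the polynomial $R(D)$ has degree at most $1$ in $h$, and that the identity $R(D)=\phi(D)\,h+\psi(D)$ of Remark~\ref{rem:splitting-def}, with $\phi(D),\psi(D)\in\mathbb{Z}[A^2,A^{-2}]$, is precisely the expansion of $R(D)$ in powers of $h$. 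In particular $\phi(D)$ and $\psi(D)$ are uniquely determined by $R(D)$: they are the coefficients of $h^1$ and $h^0$, respectively.

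The main step is then immediate. If $D$ and $D'$ differ by an extended Reidemeister move, Theorem~\ref{thm:invR} gives $R(D)=R(D')$ as elements of $\mathbb{Z}[A,A^{-1},h]$, hence as elements of $\mathbb{Z}[A^2,A^{-2}][h]$. Comparing the coefficients of $h^1$ on the two sides yields $\phi(D)=\phi(D')$, and comparing the coefficients of $h^0$ yields $\psi(D)=\psi(D')$. Since the extended Reidemeister moves generate ambient isotopy of virtual singular link diagrams, it follows at once that $\phi(\,\cdot\,)$ and $\psi(\,\cdot\,)$ are well defined on virtual singular links.

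I do not expect any real obstacle here: all of the substantive work — verifying invariance of $R$ under each extended Reidemeister move, and in particular tracking the parities $i(S)$ through the moves involving virtual crossings — has already been done in the proof of Theorem~\ref{thm:invR}. The only point requiring a moment's care is the uniqueness of the $h$-expansion, which rests on the elementary fact that $\frac{1-i(S)}{2}$ is always $0$ or $1$, never larger or negative; this is exactly what guarantees that the two sums in Remark~\ref{rem:splitting-def} are separated by distinct powers of $h$ and therefore preserved individually. (One could instead revisit the proof of Theorem~\ref{thm:invR} move by move and check that, in each local skein computation, the states with $i(S)=-1$ and those with $i(S)=1$ transform among themselves, obtaining $\phi$- and $\psi$-invariance directly; but the coefficient-comparison argument renders this unnecessary.)
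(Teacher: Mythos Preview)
Your argument is correct and is in fact cleaner than the paper's. You observe that $R(D)\in\mathbb{Z}[A^2,A^{-2}][h]$ has $h$-degree at most $1$, so the decomposition $R(D)=\phi(D)h+\psi(D)$ of Remark~\ref{rem:splitting-def} is the unique expansion in powers of $h$; invariance of $R$ (Theorem~\ref{thm:invR}) then forces invariance of each coefficient. The paper instead argues that $\phi$ and $\psi$ individually satisfy the same skein relations for classical and singular crossings and the same graph skein relations as $\left<\,\cdot\,\right>$ and $R$, and then appeals to the computations in Theorems~\ref{thm:inv} and~\ref{thm:invR} to conclude invariance move by move. This is essentially the alternative you sketch in your final parenthetical. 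Your coefficient-comparison route is shorter and avoids restating those skein relations; the paper's route has the minor expository virtue of making explicit that $\phi$ and $\psi$ can themselves be computed recursively via skein relations, but as a proof of the proposition it does no additional work beyond what Theorem~\ref{thm:invR} already supplies.
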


\begin{proof}
It is clear that polynomials $\phi(\, \cdot \,)$ and $\psi(\, \cdot \,)$ satisfy the same skein relations for the classical and singular crossings as the polynomials $R(\, \cdot \,)$ and $\left<\, \cdot \, \right >$. Moreover,

\[ \phi \left ( \raisebox{-.15cm}{\includegraphics[height=.2in]{pspin}} \right ) = -A^2-A^{-2} = \phi \left ( \raisebox{-.2cm}{\includegraphics[height=.2in]{nspin}} \right )
\]
\[
\phi  \left ( \Gamma \cup \raisebox{-.15cm}{\includegraphics[height=.2in]{pspin}} \right ) = (-A^2 -A^{-2} ) \phi \left ( \Gamma \right ) = \phi \left ( \Gamma \cup \raisebox{-.15cm}{\includegraphics[height=.2in]{nspin}} \right )
\]
\[
\phi \left ( \raisebox{-.2cm}{\includegraphics[height=.2in]{rsmooth}} \right ) = \phi \left ( \raisebox{-.15cm}{\includegraphics[height=.2in]{lcurve}} \right )  \hspace{1cm} \phi \left ( \reflectbox{\raisebox{-.2cm}{\includegraphics[height=.2in]{rsmooth}}} \right ) = \phi \left (\reflectbox{ \raisebox{-.15cm}{\includegraphics[height=.2in]{lcurve}}} \right ),
\]
and similarly, \[ \psi \left ( \raisebox{-.15cm}{\includegraphics[height=.2in]{pspin}} \right ) = -A^2-A^{-2} = \psi \left ( \raisebox{-.2cm}{\includegraphics[height=.2in]{nspin}} \right )
\]
\[
\psi  \left ( \Gamma \cup \raisebox{-.15cm}{\includegraphics[height=.2in]{pspin}} \right ) = (-A^2 -A^{-2} ) \psi \left ( \Gamma \right ) = \psi \left ( \Gamma \cup \raisebox{-.15cm}{\includegraphics[height=.2in]{nspin}} \right )
\]
\[
\psi \left ( \raisebox{-.2cm}{\includegraphics[height=.2in]{rsmooth}} \right ) = \psi \left ( \raisebox{-.15cm}{\includegraphics[height=.2in]{lcurve}} \right )  \hspace{1cm} \psi \left ( \reflectbox{\raisebox{-.2cm}{\includegraphics[height=.2in]{rsmooth}}} \right ) = \psi \left ( \reflectbox{\raisebox{-.15cm}{\includegraphics[height=.2in]{lcurve}}} \right ).
\]

Then, the same proof as in Theorem~\ref{thm:inv} can be used to show that $\phi(\, \cdot \,)$ and $\psi(\, \cdot \,)$ are invariant under the classical Reidemeister moves $R_1, R_2$ and $R_3$, as well as the moves $RS_1$ and $RS_2$. Moreover, the proof of Theorem~\ref{thm:invR} implies that $\phi(\, \cdot \,)$ and $\psi(\, \cdot \,)$ are invariant under the moves $V_1, V_2, V_{3-v}, V_{3-r}$ and $V_{3-s}$.
\end{proof}

\begin{definition}
Given a virtual singular link $L$, define the polynomial $R(L) \in \mathbb{Z}[A^2, A^{-2}, h]$ by $R( L ) : = R( D),$
where $D$ is any diagram representing $L$.
\end{definition}

\begin{example}\label{ex:2}
In this example, we evaluate the polynomial $R(D)$ of the diagram $D$ given below:
\[ D = \raisebox{-.5cm}{\includegraphics[height=.5 in]{fe1}}\]
The enhanced Kauffman-Jones states associated with $D$ are given in the first column in Table~\ref{table-example} (with certain weight maps), while the third column contains the weighted state contributions $\OR(S)$  to $R(D)$, for each of the enhanced states $S$ of $D$. Recall that 
\[\OR(S) = A^{2a(S)+4b(S)}(-A^2-A^{-2})^{\alpha(S)+|| S ||}(-A^4-A^{-4})^{\beta(S)}h^{\frac{1-i(S)}{2}}.\]
Taking the sum of the weighted state contributions, we have:
\begin{eqnarray*}
 R \left(  \, \raisebox{-.5cm}{\includegraphics[height=.5 in]{fe1}}\, \right) 
 &=& (-1) ^{c(D)} \left(\, \OR(S_1) + \OR(S_2) + \OR(S_3) + \OR(S_4) \,\right)  \\
 &=& - \left(\, \OR(S_1) + \OR(S_2) + \OR(S_3) + \OR(S_4) \,\right)  \\
&=& A^{12}h-A^4h-A^6-2A^2-A^{-2}\\
&=& h(A^{12} -A^4) + (-A^4 -2 -A^{-4}) A^2.
\end{eqnarray*}

\begin{table}[ht]
\caption{The Kauffman-Jones states of $D$ and their contributions to $R(D)$}
\label{table-example}
  \begin{tabular}{| | c | c | c | |}
    \hline \hline
     $S_1 =  \raisebox{-.47cm}{\includegraphics[height=.45 in]{fe4}}$ 
     \put(-7, 7){\fontsize{8}{10}$1$}&
 $\begin{array}{cc}
 a (S_1)=1 & b (S_1)=0\\
\alpha (S_1)=1 & \beta (S_1)=0 \\
||S_1||=1 & i(S_1)=1
 \end{array} $  & $\OR(S_1) = A^2(-A^2-A^{-2})^2$ \\
    \hline 
 $S_2 =  \raisebox{-.52cm}{\includegraphics[height=.45 in]{fe5}}$
   \put(-35, -10){\fontsize{8}{10}$1$}   \put(-16, 15){\fontsize{8}{10}$-1$} &
 $\begin{array}{cc}
 a (S_2)=0 & b (S_2)=1\\
\alpha (S_2)=1 & \beta (S_2)=0\\
||S_2||=1 & i(S_2)= -1
 \end{array} $   & $\OR(S_2) = A^4(-A^2-A^{-2})^2h$  \\
    \hline
$S_3 =  \raisebox{-.47cm}{\includegraphics[height=.45 in]{fe6}}$
  \put(-3, -7){\fontsize{8}{10}$1$}  \put(-40, -7){\fontsize{8}{10}$-1$} &
 $\begin{array}{cc}
 a (S_3)=1 & b (S_3)=0\\
\alpha (S_3)=0 & \beta (S_3)=1\\
||S_3||=1 & i(S_3)= -1
 \end{array} $ & $\OR(S_3) = A^2(-A^2-A^{-2})(-A^4-A^{-4})h$  \\
    \hline 
   $S_4 =  \raisebox{-.5cm}{\includegraphics[height=.45 in]{fe7}}$
    \put(-3, -9){\fontsize{8}{10}$1$}  \put(-17, 15){\fontsize{8}{10}$-1$}
      \put(-40, -7){\fontsize{8}{10}$-1$}  \put(-12, 3){\fontsize{8}{10}$1$}&
 $\begin{array}{cc}
 a (S_4)=0 & b (S_4)=1\\
\alpha (S_4)=0 & \beta (S_4)=1\\
||S_4||=2 & i(S_3)=-1
 \end{array} $ & $\OR(S_4) = A^4(-A^2-A^{-2})^2(-A^4-A^{-4})h$  \\
\hline \hline
    \end{tabular}
  \end{table}
 \end{example}  


\section{The splitting property}\label{sec:split}
 
In Example~\ref{ex:2}, we noticed that $R(D)=\phi(D)h + \psi(D),$ where $\phi(D) \in \mathbb{Z}[A^4,A^{-4}]$ and $\psi(D) \in \mathbb{Z}[A^4,A^{-4}] \cdot A^{2}$. Inspired by the work in~\cite{KM}, we show now that the polynomial $R(\, \cdot \,)$ always has this property. The main result is as follows:

\begin{theorem} \label{thm:splitting}
Let $L$ be a virtual singular link with $k$ components. Then 
\[R(L)=\phi(L)h + \psi(L),\] where $\phi(L) \in \mathbb{Z}[A^4,A^{-4}] \cdot A^{2(k-1)}$ and $\psi(L) \in \mathbb{Z}[A^4,A^{-4}] \cdot A^{2k}$.
\end{theorem}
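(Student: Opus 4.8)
The plan is to prove the splitting by induction on the number of classical and singular crossings, using the skein relations of Proposition~\ref{prop:skeinR} as the inductive step and a careful analysis of the all-oriented-resolution state as the base case. Write $c(D) = c_+ + c_-$ for the numbers of positive and negative classical crossings and $s$ for the number of singular crossings. For the base case, resolve \emph{every} classical and singular crossing by its oriented resolution: the resulting state is a purely virtual magnetic graph with no bivalent vertices, i.e. a disjoint union of $\|S_0\|$ oriented immersed loops, and since $L$ has $k$ components one checks (tracking how oriented resolutions merge strands) that the number of loops has a fixed parity relative to $k$. Because such a loop-only state has $i(S_0)=1$ and $\alpha=\beta=b=0$, its weighted contribution is $A^{2a(S_0)}(-A^2-A^{-2})^{\|S_0\|}$ up to sign, which lies in $\mathbb{Z}[A^4,A^{-4}]\cdot A^{2k}$ exactly when $\|S_0\| \equiv k \pmod 2$ — this congruence is the key geometric fact and should be isolated as a lemma.

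Next I would set up the induction properly. Rather than inducting on a single diagram, I would prove the stronger statement that for \emph{any} purely virtual magnetic graph $G$ (thought of as a partially-resolved diagram) appearing in the computation, the enhanced evaluation $R(G)$ splits as $\phi(G)h + \psi(G)$ with $\phi(G)\in \mathbb{Z}[A^4,A^{-4}]\cdot A^{2(\kappa-1)}$ and $\psi(G)\in\mathbb{Z}[A^4,A^{-4}]\cdot A^{2\kappa}$, where $\kappa = \kappa(G)$ is the number of components of $G$ when all of its bivalent vertices are smoothed compatibly with orientation (equivalently, $\kappa$ is $\|G\|$ after applying the first two graph skein relations of Figure~\ref{fig:graph-skein-rel} to eliminate source/sink pairs). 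The point is that $\kappa(G)$ is a move-independent integer attached to $G$, it equals $k$ for a diagram of $L$, and it behaves predictably under resolving a crossing: when we resolve a classical or singular crossing, the oriented resolution either leaves $\kappa$ unchanged or changes it by $1$ depending on whether the two strands at the crossing belong to the same component, while the disoriented resolution always changes $\kappa$ by the opposite amount and introduces a source/sink pair that contributes a factor $(-A^2-A^{-2})$ together with a change in $\|\cdot\|$.

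The inductive step is then bookkeeping with the three skein relations. For a positive crossing, $R = -A^{-2}R(\text{oriented}) - A^{-4}R(\text{disoriented})$. By induction the oriented-resolution term splits with respect to its value $\kappa_o$ and the disoriented-resolution term with respect to $\kappa_d$; one shows $\kappa_o$ and $\kappa_d$ differ by exactly $1$, so that multiplying the first by $A^{-2}$ and the second by $A^{-4}$ produces two contributions whose $h$-coefficients both land in $\mathbb{Z}[A^4,A^{-4}]\cdot A^{2(\kappa-1)}$ and whose $h^0$-parts both land in $\mathbb{Z}[A^4,A^{-4}]\cdot A^{2\kappa}$, where $\kappa$ is the value for the unresolved graph. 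The negative crossing case is identical with $A^{\pm 2}\to A^{\mp 2}$, and the singular case follows from Equation~\eqref{eq:sing-crossing} (i.e. $R(\text{sing}) = R(\text{pos}) + R(\text{neg})$) since a sum of two splitting polynomials relative to the same $\kappa$ still splits. Finally, one must verify the base-case graph evaluation is consistent with the graph skein relations of Proposition~\ref{prop:enhanced-graph-rel}: removing a source/sink pair changes neither $\kappa$ nor $R$, and deleting a free loop multiplies $R$ by $(-A^2-A^{-2})$ while dropping $\kappa$ by $1$, which is exactly the shift between the statements for $\kappa$ and $\kappa-1$ — so the whole scheme is self-consistent.

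\textbf{Main obstacle.} The delicate point is proving that $\kappa(G)$ is well-defined and that resolving a single crossing changes it by exactly $\pm 1$ in the manner claimed, with the oriented and disoriented resolutions always moving it in opposite directions; this is the ``two strands same component vs. different component'' case analysis, and it is where the parity $i(S)$ of the state and the count $\|S\|$ get linked to $k$. Equivalently — and this is probably the cleanest route — one can borrow from~\cite{KM} the fact that for a purely virtual magnetic graph $S$ the parity $i(S)$ and the component count $\|S\|$ satisfy a relation forcing $\|S\| + \tfrac{1-i(S)}{2}$ to have the same parity as $k$ modulo $2$ whenever $S$ arises from a $k$-component diagram (with the exponent of $A$, namely $2a(S)+4b(S)+2(\alpha(S)+\|S\|)$ reduced mod $4$, then being controlled accordingly), and deduce the theorem directly from the state-sum formula in Definition~\ref{def:new-poly} rather than via induction. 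I expect the induction to be conceptually cleaner but the~\cite{KM}-style parity argument to be shorter; either way, pinning down the mod-$2$ (hence mod-$4$ in the exponent of $A$) invariant is the crux, and the rest is routine.
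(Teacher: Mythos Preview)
Your primary approach --- induction on the number of crossings via the auxiliary quantity $\kappa(G)$ --- has a genuine gap: the strengthened inductive hypothesis you need is simply false once $G$ carries bivalent vertices that interact with virtual crossings. Concretely, take $G$ to be a single immersed circle with one virtual self-crossing and one source/sink pair, arranged so that the two edges meeting at the virtual crossing are adjacent (hence carry opposite weights). Then $\kappa(G)=\|G\|=1$, $i(G)=-1$, and $R(G)=(-A^2-A^{-2})h$, so $\phi(G)=-A^2-A^{-2}\in A^{2}\mathbb{Z}[A^4,A^{-4}]$, \emph{not} $A^{2(\kappa-1)}\mathbb{Z}[A^4,A^{-4}]=A^{0}\mathbb{Z}[A^4,A^{-4}]$. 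This $G$ arises as the disoriented resolution of the classical crossing in the two-component ``virtual Hopf link'' (one classical crossing, one virtual crossing), so your inductive step breaks at the very first place virtuality enters. Relatedly, your claim that the oriented and disoriented resolutions ``always move $\kappa$ in opposite directions'' is false in the virtual setting: in that same example $\kappa=2$ while $\kappa_o=\kappa_d=1$.

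The underlying reason is that in the virtual world the two smoothings of a crossing need not change the circle count at all --- they can leave $\|S\|$ fixed while flipping the parity $i(S)$. Your $\kappa$ forgets the bivalent vertices, hence forgets the parity, and so cannot by itself control the $A$-degree mod~$4$. The paper handles exactly this dichotomy: Lemma~\ref{proof thm-lemma2} shows that changing one resolution either changes $\|S\|$ by $\pm 1$ while preserving $i(S)$, or preserves $\|S\|$ while flipping $i(S)$, and in either case $\text{max}_A(\OR(S))$ moves by the right amount mod~$4$. Combined with Lemma~\ref{proof thm-lemma1} (your base-case observation on $S_0$, made precise) and Lemma~\ref{proof thm-lemma3} for disjoint unions, this yields the theorem via the state sum rather than skein induction. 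This is precisely the ``[KM]-style parity argument'' you sketch as an alternative; it is not merely shorter but is in fact the route that works, because the parity-flip case is the virtual phenomenon your $\kappa$-induction cannot see.
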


The proof of Theorem~\ref{thm:splitting} is similar in spirit to the proof of Theorem 3 in~\cite{KM}. 

Before we prove our theorem, we need to look at a few lemmas. For that, consider a weighted state contribution $\OR(S)$ to the polynomial $R(D)$, where $S$ is any enhanced Kauffman-Jones state associated with a diagram $D$, and note that the powers of $A$ in all of the monomials of $\OR(S)$ are congruent to each other modulo 4. Denote by $\text{max}_A(\OR(S))$ the maximum power of $A$ in $\OR(S)$. That is, 
\[ \text{max}_A(\OR(S) ): = 2a(S) + 4b(S) + 2(\alpha(S) + ||S||) + 4 \beta(S)  \]
where $a(S), b(S), \alpha(S), \beta(S)$ and $||S||$ were defined at the end of Section~\ref{sec:poly-def}.

\begin{lemma}\label{proof thm-lemma1}
Let $D$ be a connected virtual singular link diagram with $k$-components and denote by $S_0$ the enhanced Kauffman-Jones state of $D$ obtained by resolving each of the classical and singular crossings in $D$ using the oriented resolution. Then, 
\[\OR(S_0) \in \mathbb{Z}[A^4, A^{-4}] \cdot A^{2k}.\]
\end{lemma}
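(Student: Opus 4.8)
\textbf{Proof plan for Lemma~\ref{proof thm-lemma1}.}
The plan is to analyze the quantity $\text{max}_A(\OR(S_0))$ directly, using the definitions of $a, b, \alpha, \beta$ and $||\cdot||$, and to show two things: first, that $\text{max}_A(\OR(S_0)) \equiv 2k \pmod 4$, and second, that $\OR(S_0)$ is in fact an integer multiple of $A^{2k}$ times a polynomial in $A^4, A^{-4}$, i.e.\ that the lowest power of $A$ appearing is also $\geq 2k$ up to the mod-$4$ constraint. Since in $S_0$ every classical and singular crossing receives the oriented resolution, we have $b(S_0) = 0$ for the disoriented count and $\beta(S_0) = 0$, so the monomial $\OR(S_0)$ simplifies to $A^{2a(S_0)}(-A^2-A^{-2})^{\alpha(S_0) + ||S_0||}$, where here $a(S_0)$ is (number of negative classical crossings of $D$) minus (number of positive classical crossings of $D$) — since all classical crossings were oriented — and $\alpha(S_0)$ is the total number of singular crossings of $D$. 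The key observation is then that the powers of $A$ contributed by $A^{2a(S_0)}$ are even, and that the factor $(-A^2-A^{-2})^{\alpha(S_0)+||S_0||}$ contributes powers of $A$ ranging from $-2(\alpha(S_0)+||S_0||)$ to $+2(\alpha(S_0)+||S_0||)$, all of the same parity modulo $4$ as $2(\alpha(S_0)+||S_0||)$.

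First I would establish the parity/congruence claim. Every power of $A$ in $\OR(S_0)$ is congruent modulo $4$ to $\text{max}_A(\OR(S_0)) = 2a(S_0) + 2\alpha(S_0) + 2||S_0||$. The crucial point is to identify $||S_0||$. When every crossing of $D$ is given the oriented resolution, the resulting purely virtual magnetic graph $S_0$ (up to the pure virtual moves and removal of adjacent oppositely-oriented bivalent vertices) is exactly the collection of Seifert-type circles obtained from $D$ with the virtual crossings retained; in particular, since the oriented resolution respects orientations and $D$ has $k$ components, one checks that the number of closed curves $||S_0||$ is congruent to $k$ modulo $2$ — this uses that $D$ is connected as a diagram, so changing from the $k$ link components to the oriented-smoothing circles changes the count of circles by the number of classical-plus-singular crossings, and each such change alters $||S_0||$ by $\pm 1$, while $a(S_0) + \alpha(S_0)$ simultaneously records (with signs that are all $\pm 1$) the number of crossings. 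Tracking these parities, $2a(S_0) + 2\alpha(S_0) + 2||S_0|| \equiv 2k \pmod 4$: each crossing contributes $\pm 2$ to $2a(S_0)+2\alpha(S_0)$ and simultaneously $\pm 2$ to $2||S_0||$ relative to the baseline of $k$ circles (the $k$ components), and $\pm 2 \pm 2 \equiv 0 \pmod 4$, so the total is $\equiv 2k$.

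Next I would pin down the lowest power. The minimum power of $A$ in $\OR(S_0)$ is $2a(S_0) - 2(\alpha(S_0)+||S_0||)$. I need this to be $\geq 2k$, which after the mod-$4$ congruence already established amounts to showing it is not $\leq 2k - 4$; equivalently, combining with the analysis above, it suffices to see that $\OR(S_0)/A^{2k}$, which is a Laurent polynomial in $A^4, A^{-4}$ by the congruence, has no negative powers — but in fact the cleanest route is to observe that the \emph{minimum} power $2a(S_0) - 2\alpha(S_0) - 2||S_0||$ and the \emph{maximum} power $2a(S_0)+2\alpha(S_0)+2||S_0||$ are congruent mod $4$ to $2k$, and then show the minimum is $\geq 2k$ directly. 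Here I would use that for a connected diagram the oriented smoothing produces at least... — more carefully, I would argue by the same crossing-by-crossing bookkeeping: starting from the $k$-component diagram and resolving crossings one at a time, each resolution changes the running value of (twice the signed crossing count so far) $-$ (twice the circle count) by $0$ or $\pm 4$, and a parity/extremality argument together with connectivity of $D$ forces the final value to land in $\mathbb{Z}[A^4,A^{-4}]\cdot A^{2k}$.

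\textbf{Anticipated main obstacle.} The delicate step is the bookkeeping that relates $||S_0||$ to $k$ and to the signed crossing data $a(S_0)$ and $\alpha(S_0)$ in a way that yields the exact factor $A^{2k}$ (not merely the right residue modulo $4$). Getting the correct parity of $||S_0||$ relative to $k$, and ensuring that the connectivity hypothesis on $D$ is used precisely where it is needed (it rules out the contributions of the $k$ separate components drifting the exponent below $2k$), is where the real care lies; the sign conventions in the definitions of $a(S)$ and $b(S)$ (negative minus positive, rather than the reverse) must be tracked meticulously throughout.
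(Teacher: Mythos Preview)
Your proposal contains a fundamental misreading of the target. The ring $\mathbb{Z}[A^4, A^{-4}]$ is the \emph{Laurent} polynomial ring in $A^4$, so $\mathbb{Z}[A^4, A^{-4}]\cdot A^{2k}$ consists of all Laurent polynomials in $A$ whose nonzero terms have exponent congruent to $2k$ modulo $4$; there is no lower bound on the exponents. Consequently your entire ``second step'' --- showing that the minimum power of $A$ in $\OR(S_0)$ is at least $2k$ --- is both unnecessary and in general false (for any diagram with enough crossings the minimum exponent $2a(S_0) - 2(\alpha(S_0)+||S_0||)$ drops well below $2k$). Once you establish $\text{max}_A(\OR(S_0)) \equiv 2k \pmod 4$ and note that all exponents in $\OR(S_0)$ are congruent to the maximum modulo $4$, you are finished. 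You should also record that $S_0$ has no bivalent vertices, so $i(S_0)=1$ and no factor of $h$ appears; this is needed for $\OR(S_0)$ to land in $\mathbb{Z}[A^4,A^{-4}]\cdot A^{2k}$ rather than in $\mathbb{Z}[A^4,A^{-4}]\cdot A^{2k}\cdot h$.

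Your congruence argument also contains an incorrect intermediate assertion: you claim $||S_0|| \equiv k \pmod 2$, but an unknot diagram with a single kink ($k=1$, one classical crossing) has $||S_0|| = 2$. The correct parity is $||S_0|| \equiv k + c(D) + s(D) \pmod 2$. That said, your final bookkeeping --- each crossing contributes $\pm 1$ to $a(S_0)+\alpha(S_0)$ and simultaneously changes the running circle count by $\pm 1$ relative to the baseline $k$, so each crossing changes $a(S_0)+\alpha(S_0)+||S_0||$ by an even integer --- is correct and yields $\text{max}_A(\OR(S_0)) = 2\bigl(a(S_0)+\alpha(S_0)+||S_0||\bigr) \equiv 2k \pmod 4$ directly. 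This is in fact cleaner than the paper's route, which first uses connectivity to reduce to a knot diagram and then runs a walk/Gauss-code argument to determine the parity of $||S_0||$; your crossing-by-crossing splice argument bypasses that machinery entirely (and, incidentally, does not even require the connectivity hypothesis).
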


\begin{proof}
We first note that for the oriented state $S_0$, we have:
\[ a(S_0) = -w(D),\,\, b(S_0) = 0,\,\, \alpha(S_0) = s(D),\, \, \beta(S_0) = 0,  \]
where $w(D)$ is the \textit{writhe} of $D$ (that is, the sum of the signs of the classical crossings in $D$) and $s(D)$ is the number of singular crossings in $D$.

The state $S_0$ has no bivalent vertices, and we consider the weight map that assigns 1 to all loops in $S_0$. Therefore, $i(S_0) = 1$ and $\OR(S_0) \in \mathbb{Z}[A^2, A^{-2}]$.

By hypothesis, the original diagram $D$ is connected. Consider any two linked components of  $D$ and let $p$ be a classical crossing formed by the two components. If we resolve the crossing $p$ in the oriented fashion, we obtain a connected virtual singular link diagram with one less component. Therefore, it is clear that there exist $k-1$ classical crossings in $D$, such that by applying the oriented resolution to all of these crossings results in a virtual singular knot diagram, call it $D_1$. That is, $D_1$ is a one-component virtual singular link diagram satisfying
\[ c(D_1) = c(D) - (k-1) \,\, \text{and} \,\, s(D_1) = s(D).  \]

Choose a base-point on $D_1$ and consider the walk along $D_1$ that starts at the base-point and proceeds according to the diagram's orientation. When arriving at a singular crossing, the walk continues straight ahead. The walk ends when returning to the base-point for the first time. Label the classical and singular crossings in $D_1$ and list them in the order they are reached via the walk.
If there are two classical and/or singular crossings, $p_a$ and $p_b$, which alternate in the list produced by the walk, as shown below:
\[ \cdots p_a, \cdots, p_b, \cdots, p_a, \cdots p_b ,\cdots ,  \]
resolve these two crossings using the oriented resolution. This process results in a (connected) virtual singular knot diagram, which we denote by $D_2$, as illustrated below.

\[ 
\raisebox{-1.2cm}{\includegraphics[height=0.9 in]{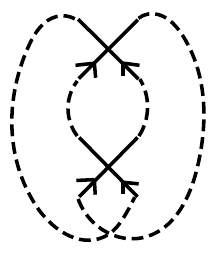}} \put(-31,-41){$D_1$} \put(-42,17){$p_b$} \put(-42,-12){$p_a$} \Longrightarrow \raisebox{-1.2cm}{\includegraphics[height=.9 in]{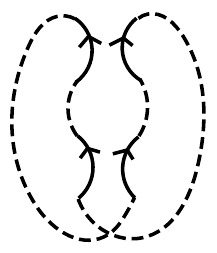}} \put(-31,-41){$D_2$}
\]

Note that in the illustration of $D_1$ and $D_2$, we use flat crossings for $p_a$ and $p_b$ to represent either type of classical crossing or singular crossing. 
Now choose a base-point on $D_2$ and consider the walk starting at the base-point and apply the same procedure as for $D_1$. This produces a (connected) virtual singular knot diagram, $D_3$.

Continuing with this process, we obtain a finite sequence of virtual singular knot diagrams 
\[D_1,\, D_2,\, D_3, \, \cdots, D_l = D ^{\prime},\] where 
\[ c(D_i) + s(D_i) - c(D_{i+1}) - s(D_{i+1}) = 2 \,\, \text{for all} \,\, 1\leq i \leq l-1, \]
and where any two classical and/or singular crossings $p'_a$ and $p'_b$ in $D^{\prime}$ appear in a list produced by a walk along $D^{\prime}$ as follows:
\[ \cdots p'_a, \cdots, p'_b, \cdots, p'_b, \cdots p'_a ,\cdots  \]
Let $m$ be the number of classical and singular crossings in $D^{\prime}$. Resolving all of these $m$ crossings using the oriented resolution produces a purely virtual magnetic graph with $m+1$ components without bivalent vertices.

It follows that $||S_0|| = m+1$ and that
\begin{eqnarray}\label{lemma-proof}
m = c(D^{\prime}) + s(D^{\prime})  \equiv c(D) + s(D)- (k-1) \pmod 2 .  
\end{eqnarray}
Then,
\begin{eqnarray*}
\text{max}_A(\OR(S_0)) 
&=& 2a(S_0) + 2 \alpha(S_0) + 2 ||S_0||\\
&=& -2w(D) + 2 s(D) + 2(m+1)\\
&=&2(-w(D) + s(D) + m+1).
\end{eqnarray*}
But $w(D) \equiv -c(D) \pmod 2$, and thus
\begin{eqnarray*}
-w(D) + s(D) + m+1
&\equiv& c(D) + s(D) + m +1 \pmod 2 \\
&\stackrel{\eqref{lemma-proof}}{\equiv}& m + k-1 + m +1 \pmod 2 \\
&\equiv& k \pmod 2.
\end{eqnarray*}
Therefore, $\text{max}_A(\,\OR(S_0)) \equiv 2k \pmod 4$. Since all powers of $A$ in $\OR(S_0)$ are congruent to $\text{max}_A(\,\OR(S_0)) $ modulo 4, the desired statement follows.
\end{proof}

\begin{lemma}\label{proof thm-lemma2}
Let $D$ be a virtual singular link diagram and $S$ an enhanced Kauffman-Jones state of $D$. Let $S'$ be an enhanced Kauffman-Jones state of $D$ obtained from $S$ by changing the resolution at one of the classical or singular crossings of $D$.\\
(1) If $||S'|| \neq ||S||$, then $i(S') = i(S)$ and $\text{max}_A(\OR(S')) \equiv \text{max}_A(\OR(S)) \pmod 4$. \\
(2) If $||S'|| = ||S||$, then $i(S')=- i(S)$ and $\text{max}_A(\OR(S')) \equiv \text{max}_A(\OR(S)) + 2 \pmod 4$.
\end{lemma}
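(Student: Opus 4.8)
The plan is to analyze the local effect on a state of changing the resolution at a single crossing, and to track simultaneously the parity $i(S)$ and the quantity $\text{max}_A(\OR(S))$. The key observation is that the two resolutions of a crossing — the oriented and the disoriented — differ by the local replacement shown in Figure~\ref{fig:or-dis resolutions}, which is precisely the same local move as the first two graph skein relations relate (a ``smoothing'' versus a ``curve'' through a pair of bivalent vertices). So changing the resolution at one crossing of $D$ replaces two oriented arc-strands by two disoriented arc-strands, introducing (or removing) a pair of oppositely oriented bivalent vertices, without touching anything outside the small neighborhood. First I would set up the bookkeeping: let $S$ and $S'$ differ at one crossing, and compare the contributions term-by-term in the definition of $\OR$. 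Exactly one of the following changes occurs to the exponents: either a positive crossing moves between the oriented and disoriented tallies (changing the pair $(a(S),b(S))$), or a singular crossing does (changing $(\alpha(S),\beta(S))$). In every case the resulting change to $2a+4b+2\alpha+4\beta$ is a multiple of $2$, and in fact one checks by direct substitution that this portion of $\text{max}_A$ changes by an amount $\equiv 2 \pmod 4$ (passing an oriented resolution of a positive crossing to a disoriented one changes $2a(S)+4b(S)$ by $2a\to 2a$ dropping a contribution and $4b$ gaining one, net $\pm 2 \pmod 4$, and similarly for singular crossings where $2\alpha \to 4\beta$ shifts by $\pm 2 \pmod 4$). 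Hence, before accounting for $||S||$, the crossing-change always shifts $\text{max}_A$ by $2 \pmod 4$.

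Next I would handle the two cases according to whether the number of components changes. The local replacement of two arcs (either the oriented pair or the disoriented pair) is exactly a smoothing of a $2$-valent tangle, so it either merges two components into one or splits one into two — in either case $||S'|| = ||S|| \pm 1$ — or else it leaves $||S||$ unchanged; these are the only possibilities for a single local reconnection. In case (1), $||S'|| = ||S|| \pm 1$, so $2||S'||$ differs from $2||S||$ by $2$, which combines with the $2 \pmod 4$ shift from the crossing bookkeeping to give a net shift $\equiv 0 \pmod 4$; that is $\text{max}_A(\OR(S')) \equiv \text{max}_A(\OR(S)) \pmod 4$. For the parity statement in case (1), the point is that when the two arcs being reconnected lie on two distinct components of the ``smaller'' picture (the merge/split situation), a weight map for one state restricts to a weight map for the other after at most flipping the weights on one entire merged/split component, and such a global flip changes no virtual-crossing parity $i_\tau(v) = \tau(e)\tau(e')$ except possibly at crossings between the two affected components — and by the even-intersection fact used in the proof of Lemma~2 (two components meet in an even number of virtual crossings), the product of those changes is $1$. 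Hence $i(S') = i(S)$.

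In case (2), $||S'|| = ||S||$, so the $2||S||$ term contributes nothing to the difference, and $\text{max}_A(\OR(S'))$ differs from $\text{max}_A(\OR(S))$ by exactly the $2 \pmod 4$ coming from the crossing bookkeeping, giving the asserted $\text{max}_A(\OR(S')) \equiv \text{max}_A(\OR(S)) + 2 \pmod 4$. For the parity, now the local reconnection keeps $||S||$ fixed, which forces the two reconnected arcs to have lain on the same component and, topologically, the reconnection must ``re-route'' that component through a new pair of bivalent vertices: following the component around, one finds that the weight map on one side cannot extend consistently to the other without the forced sign change across the new vertex pair, so exactly one virtual crossing's worth of parity flips (equivalently, the new source/sink pair sits on a strand whose weight is forced to change on one of the two arcs), giving $i(S') = -i(S)$. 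I would make this precise using the local weight-map skein relations recorded in Proposition~\ref{prop:enhanced-graph-rel} (the two relations of the form $i(\cdots) = -i(\cdots)$), which is exactly the statement that passing a strand across a source/sink pair negates the parity.

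The main obstacle I anticipate is the parity analysis — proving cleanly that ``$||S||$ changes $\iff$ $i(S)$ is preserved'' and ``$||S||$ is fixed $\iff$ $i(S)$ flips.'' The exponent arithmetic mod $4$ is routine substitution. The parity dichotomy is the genuine content: one must argue that a single reconnection either merges/splits components (and then the only parity-relevant virtual crossings are those between the two components, occurring in even number) or preserves the component count (and then the new bivalent-vertex pair sits in a position that forces a single sign change). I would organize this by choosing, for $S'$, a specific weight map extending a given one of $S$ off the neighborhood, reducing the comparison to a purely local computation governed by the relations in Proposition~\ref{prop:enhanced-graph-rel} together with the even-intersection lemma already proved.
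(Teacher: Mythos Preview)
Your exponent arithmetic and case (1) are essentially the paper's argument. The gap is in the parity claim for case (2).

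When $||S'|| = ||S||$, the single component through the neighborhood breaks (outside that neighborhood) into two arcs $C_1$ and $C_2$, identical in $S$ and $S'$. Your assertion that ``exactly one virtual crossing's worth of parity flips'' is not correct: the weight map you construct for $S'$ will necessarily differ from that of $S$ on \emph{all} of one of the arcs, say $C_1$, and agree on $C_2$. Consequently the virtual crossings whose parity changes are precisely those where $C_1$ meets $C_2$, and there may be many of them. The actual reason $i(S') = -i(S)$ is that $C_1$ and $C_2$ intersect in an \emph{odd} number of virtual crossings; this is the topological input you are missing, and it is exactly what forces the component count to stay fixed under the reconnection (had they met evenly, the reconnection would split or merge). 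This odd-intersection fact is the crux of case (2) in the paper's proof, and it is not a consequence of the even-intersection lemma you cite, which concerns two \emph{distinct closed} components.

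Your appeal to the relations $i(\cdots) = -i(\cdots)$ in Proposition~\ref{prop:enhanced-graph-rel} is also misplaced: those relations describe sliding a strand virtually past a bivalent vertex, whereas changing a resolution introduces no new virtual crossings at all; the parity change you need is global, coming from the odd $C_1$--$C_2$ intersection count, not from any local skein move.
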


\begin{proof}Let $p$ be a (classical or singular) crossing of $D$ where, say, a disoriented resolution occurred to form the state $S$. Then an oriented resolution was applied at $p$ to obtain the state $S'$, and all of the other crossings of $D$ received the same type of resolutions to arrive at $S$ and $S'$, respectively.

(1) Suppose that $||S'|| \neq ||S||$. 
Since $S$ and $S'$ differ only in the neighborhood of $p$, $S'$ either has one less or one  more component than $S$, as demonstrated below (we use dashed lines to represent the regions where $S$ and $S'$ coincide).
\[ 
\raisebox{-.8cm}{\includegraphics[height=.95 in]{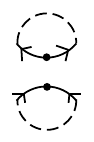}} \put(-25,-35){$S$} 
\put(-34, 25){\fontsize{8}{10}$1$} \put(-21, 25){\fontsize{8}{10}$-1$}
\put(-34, 5){\fontsize{8}{10}$1$} \put(-18, 5){\fontsize{8}{10}$-1$}
\hspace{0.2cm} \raisebox{.2cm}{$\longrightarrow$} \hspace{0.2cm} 
 \raisebox{-.8cm}{\includegraphics[height=.95 in]{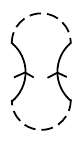}} \put(-22,-35){$S'$} 
\put(-37, 14){\fontsize{8}{10}$1$} \put(-10, 14){\fontsize{8}{10}$-1$}\
\hspace{0.7in}
\raisebox{-.6cm}{\includegraphics[height=.58 in]{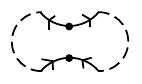}} \put(-43,-35){$S$} 
\put(-47, 20){\fontsize{8}{10}$1$} \put(-33, 20){\fontsize{8}{10}$-1$}
\put(-47, -2){\fontsize{8}{10}$1$} \put(-33, -2){\fontsize{8}{10}$-1$}
\longrightarrow  \raisebox{-.6cm}{\includegraphics[height=.6 in]{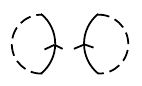}} \put(-40,-35){$S'$}
\put(-54, 5){\fontsize{8}{10}$1$} \put(-26, 5){\fontsize{8}{10}$-1$}
\]

Consider first the case where $||S'|| = ||S|| - 1$. Assign weight maps to $S$ and $S'$ such that the edges shown have weights as depicted in the diagrams above.
Then the weight maps assigned to $S$ and $S'$ coincide on the common regions, and any virtual crossing will have the same parity with respect to $S$ and $S'$. It follows that $i(S)=i(S')$. 

Suppose that $p$ is a singular crossing (the proof is similar for the case when $p$ is a classical  crossing, and thus it is omitted to avoid repetition). We then have the following equalities,
\[ a(S)=a(S'), b(S)=b(S'), ||S||=||S'||+1, \alpha(S)=\alpha(S')-1, \beta(S)=\beta(S')+1,\]
which imply that
\begin{eqnarray*}
\text{max}_A(\OR(S) ) &=& 2a(S) + 4b(S) + 2(\alpha(S) + ||S||) + 4 \beta(S)  \\
&=& 2a(S') + 4b(S') + 2(\alpha(S')-1 + ||S'||+1) + 4 (\beta(S')+1) \\
&=& 2a(S') + 4b(S') + 2(\alpha(S') + ||S'||) + 4 \beta(S') +4 \\
&=& \text{max}_A(\OR(S') ) +4.
\end{eqnarray*}
Thus, $\text{max}_A(\OR(S')) \equiv \text{max}_A(\OR(S)) \pmod 4$. 

 Now consider the case where $||S'|| = ||S|| + 1$, and assign weight maps to $S$ and $S'$ so that the edges shown in the diagrams receive the same weights as in the previous case. Then a given virtual crossing will have the same parity with respect to $S$ and $S'$, respectively, and hence $i(S)=i(S')$. Moreover, suppose this time that $p$ is a positive classical crossing. Then
\[ a(S)=a(S')+1, b(S)=b(S')-1, ||S||=||S'||-1, \alpha(S)=\alpha(S'), \beta(S)=\beta(S'),\]
and therefore,
\begin{eqnarray*}
\text{max}_A(\OR(S) ) &=& 2a(S) + 4b(S) + 2(\alpha(S) + ||S||) + 4 \beta(S)  \\
&=& 2(a(S')+1) + 4(b(S')-1) + 2(\alpha(S')+ ||S'||-1)+ 4 \beta(S') \\
&=& 2a(S') + 4b(S') + 2(\alpha(S') + ||S'||) + 4 \beta(S') -4 \\
&=& \text{max}_A(\OR(S') ) -4.
\end{eqnarray*}
Thus, $\text{max}_A(\OR(S')) \equiv \text{max}_A(\OR(S)) \pmod 4$. If $p$ is a negative classical crossing or a singular crossing, the proof follows similarly. 

(2) Suppose now that $||S'|| = ||S||$, and assign weight maps to $S$ and $S'$ as shown below.
\[ 
\raisebox{-.8cm}{\includegraphics[height=.8 in]{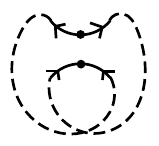}} \put(-33,-35){$S$}
\put(-45, 29){\fontsize{8}{10}$-1$} \put(-23, 29){\fontsize{8}{10}$1$} 
\put(-72, 0){$C_1$} 
\put(0, 0){$C_2$} 
\put(-40, -3){\fontsize{8}{10}$1$} \put(-30, -3){\fontsize{8}{10}$-1$} 
\hspace{0.5cm}
\longrightarrow  
\hspace{0.5cm} \raisebox{-.8cm}{\includegraphics[height=.8 in]{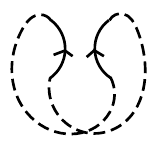}} \put(-35,-35){$S'$} 
\put(-72, 0){$C_1$} 
\put(0, 0){$C_2$} 
\put(-47, 15){\fontsize{8}{10}$1$} \put(-18, 15){\fontsize{8}{10}$1$}
\]

Note that the arcs $C_1$ and $C_2$ (as depicted in the above diagrams) are identical in both of the states $S$ and $S'$, and that these arcs must intersect in an odd number of virtual crossings. In addition, the weight maps for $S$ and $S'$ as defined above exist, since both arcs $C_1$ and $C_2$ must contain an even number of bivalent vertices.

Then, an edge belonging to the arc $C_1$ will have opposite weights with respect to $S$ and $S'$, respectively. On the other hand, an edge belonging to the arc $C_2$ will have the same weights with respect to $S$ and $S'$. It follows that if $v$ is a virtual crossing where arcs $C_1$ and $C_2$ intersect, then the parity of $v$ with respect to $S$ is opposite from the parity of $v$ with respect to $S'$. Since the arcs $C_1$ and $C_2$ intersect in an odd number of virtual crossings, and since the parity of a virtual crossing where $C_1$ (or $C_2$) intersects itself is the same with respect to both states $S$ and $S'$, it follows that $i(S) = - i(S')$.

To show the second part of the statement, suppose that $p$ is a negative classical crossing. Then 
\[ a(S)=a(S')-1, b(S)=b(S')+1, ||S||=||S'||, \alpha(S)=\alpha(S'), \beta(S)=\beta(S').\]
Therefore, 
\begin{eqnarray*}
\text{max}_A(\OR(S) ) &=& 2a(S) + 4b(S) + 2(\alpha(S) + ||S||) + 4 \beta(S)  \\
&=& 2(a(S')-1) + 4(b(S')+1) + 2(\alpha(S')+ ||S'||)+ 4 \beta(S') \\
&=& 2a(S') + 4b(S') + 2(\alpha(S') + ||S'||) + 4 \beta(S') +2 \\
&=& \text{max}_A(\OR(S') ) +2.
\end{eqnarray*}
Thus, $\text{max}_A(\OR(S')) \equiv \text{max}_A(\OR(S))+2 \pmod 4$.
\end{proof}

\begin{lemma}\label{proof thm-lemma3}
Let $D = D_1 \cup D_2$ be the disjoint union of virtual singular link diagrams $D_1$ and $D_2$, and let
$R(D_i)=\phi(D_i)h + \psi(D_i)$ for $i = 1, 2$ as given in Remark~\ref{rem:splitting-def}. Then $R(D) = \phi(D)h + \psi(D)$, where
\[\phi (D)  = \phi(D_1) \psi(D_2) + \psi(D_1)\phi(D_2)  \,\, \text{and} \,\, \psi(D) = \phi(D_1) \phi(D_2) + \psi(D_1) \psi(D_2).   \]
\end{lemma}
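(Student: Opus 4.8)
The plan is to analyze how the state sum defining $R(D)$ factors when $D = D_1 \cup D_2$ is a disjoint union. The key observation is that every Kauffman-Jones state $S$ of $D$ is itself a disjoint union $S = S_1 \cup S_2$, where $S_i$ is a Kauffman-Jones state of $D_i$ obtained by resolving exactly the classical and singular crossings of $D_i$ (since $D_1$ and $D_2$ share no crossings, the resolutions are independent). This gives a bijection between the set of states of $D$ and the product of the set of states of $D_1$ with the set of states of $D_2$. Moreover, a weight map of $S$ is determined by independently choosing weight maps $\tau_1$ of $S_1$ and $\tau_2$ of $S_2$, so enhanced states of $D$ also correspond bijectively to pairs of enhanced states of $D_1$ and $D_2$.

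First I would record the additivity of the various state statistics under this decomposition: $c(D) = c(D_1) + c(D_2)$, $a(S) = a(S_1) + a(S_2)$, $b(S) = b(S_1) + b(S_2)$, $\alpha(S) = \alpha(S_1) + \alpha(S_2)$, $\beta(S) = \beta(S_1) + \beta(S_2)$, and $\|S\| = \|S_1\| + \|S_2\|$. The only slightly subtle point is the parity: since $D_1$ and $D_2$ are disjoint, no virtual crossing of $S$ lies between an edge of $S_1$ and an edge of $S_2$, so the virtual crossings of $S$ are precisely those of $S_1$ together with those of $S_2$, each with the same parity it had in its own piece. Hence $i(S) = i(S_1)\, i(S_2)$, and therefore $\tfrac{1-i(S)}{2} \equiv \tfrac{1-i(S_1)}{2} + \tfrac{1-i(S_2)}{2} \pmod 2$, i.e. $h^{\frac{1-i(S)}{2}} = h^{\frac{1-i(S_1)}{2}} h^{\frac{1-i(S_2)}{2}}$ under the agreement that $h^2 = 1$ is not assumed — more precisely, $i(S) = 1$ iff $i(S_1) = i(S_2)$, and $i(S) = -1$ iff exactly one of $i(S_1), i(S_2)$ equals $-1$, which is exactly the multiplication rule $\phi h \cdot \phi h \to \phi^2$ contributing to $\psi$, etc.

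Combining these, the weighted state contribution factors as $\OR(S) = \OR(S_1)\,\OR(S_2)$, so
\[
R(D) = (-1)^{c(D_1)+c(D_2)} \sum_{S_1}\sum_{S_2} \OR(S_1)\OR(S_2) = \Bigl( (-1)^{c(D_1)}\sum_{S_1}\OR(S_1)\Bigr)\Bigl( (-1)^{c(D_2)}\sum_{S_2}\OR(S_2)\Bigr) = R(D_1)\,R(D_2).
\]
Then I would simply multiply out $R(D_1)R(D_2) = (\phi(D_1)h + \psi(D_1))(\phi(D_2)h + \psi(D_2))$, using $h^2$ as a formal placeholder: grouping the $h^2$ term with a $\psi$-contribution is justified because $i(S) = 1$ whenever $i(S_1) = i(S_2) = -1$ (two factors of $-1$ multiply to $+1$), so the exponent of $h$ in the combined state is $0$, not $2$. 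This is the one place where care is needed: I expect the main (minor) obstacle to be phrasing the $h$-bookkeeping correctly, namely recognizing that the product of two enhanced states each with parity $-1$ has parity $+1$, so that the $\phi(D_1)\phi(D_2)$ term lands in $\psi(D)$ and the cross terms $\phi(D_1)\psi(D_2) + \psi(D_1)\phi(D_2)$ land in $\phi(D)$. Collecting the coefficient of $h^1$ and the constant term then yields $\phi(D) = \phi(D_1)\psi(D_2) + \psi(D_1)\phi(D_2)$ and $\psi(D) = \phi(D_1)\phi(D_2) + \psi(D_1)\psi(D_2)$, as claimed.
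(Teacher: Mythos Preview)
Your approach is essentially the same as the paper's: decompose each state of $D$ as $S_1 \cup S_2$, record additivity of $c, a, b, \alpha, \beta, \|\cdot\|$ and multiplicativity of the parity $i(S) = i(S_1)i(S_2)$, then read off $\phi(D)$ and $\psi(D)$ by sorting states according to whether $i(S_1) = i(S_2)$ or not. One presentational caution: the displayed identities $\OR(S) = \OR(S_1)\OR(S_2)$ and $R(D) = R(D_1)R(D_2)$ are literally false in $\mathbb{Z}[A,A^{-1},h]$ (the left side has $h$-degree at most $1$, the right can have $h^2$), so although you correctly diagnose and repair this in the surrounding prose, it would be cleaner to bypass those equations and argue directly from the parity dichotomy, as the paper does.
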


\begin{proof} Every enhanced Kauffman-Jones state $S$ of $D$ is of the form $S_1 \cup S_2$, where $S_1$ and $S_2$ are enhanced Kauffman-Jones states of $D_1$ and $D_2$, respectively.
Moreover, since $S_1 \cap S_2 = \emptyset$, the following hold:
\[||S|| = ||S_1|| + ||S_2|| \]
\[ a(S) = a(S_1) + a(S_2), \,\, b(S) = b(S_1) + b(S_2) \]
\[ \alpha(S) = \alpha(S_1) + \alpha(S_2), \,\, \beta(S) = \beta(S_1) + \beta(S_2) \]
\[  i(S) = 1 \Leftrightarrow i(S_1) = i(S_2).\]
We note also that $c(D) = c(D_1) + c(D_2)$. Then the statement follows easily from the defining state-sum formula for the polynomial $R(D)$.
\end{proof}


\textit{Proof of Theorem~\ref{thm:splitting}.} Suppose that $L$ is a virtual singular link with $k$ components and let $D = D_1 \cup D_2 \cup \cdots D_n$ be a diagram of $L$ where $D_i$, for $ 1\leq i \leq n$, is a connected diagram with $k_i$ components, and where $k_1 + \cdots + k_n = k$. We provide a proof by induction on $n$.

If $n=1$, then $D$ is connected and we can use Lemma~\ref{proof thm-lemma1} applied to the oriented enhanced Kauffman-Jones state $S_0$ corresponding to $D$. Thus $\OR(S_0) \in \mathbb{Z}[A^4, A^{-4}] \cdot A^{2k}$. If $S$ is any other enhanced Kauffman-Jones state of $D$, then  $||S|| \neq ||S_0||$ or $||S|| = ||S_0||$. By Lemma ~\ref{proof thm-lemma2}, in the first case $\OR(S) \in \mathbb{Z}[A^4, A^{-4}] \cdot A^{2k}$. In the second case, $\OR(S) \in \mathbb{Z}[A^4, A^{-4}] \cdot A^{2k}\cdot A^2h$, or equivalently, $\OR(S) \in \mathbb{Z}[A^4, A^{-4}] \cdot A^{2(k-1)}h$ (since $2(k+1) \equiv 2(k-1) \pmod 4$). 
Since
\[ R(D) = (-1)^{c(D)} \sum_S \OR(S) 
 = (-1)^{c(D)} \left (\sum_{S, ||S|| = ||S_0||} \OR(S) + \sum_{S, ||S|| \neq ||S_0||} \OR(S)  \right),
\]
it follows that $R(D) = \phi(D) h + \psi(D)$ where $\phi(D) \in \mathbb{Z}[A^4,A^{-4}] \cdot A^{2(k-1)}$
and $\psi(D) \in \mathbb{Z}[A^4,A^{-4}] \cdot A^{2k} $. 

Suppose that $n > 1$ and that the statement is true for any disjoint union of $l$ connected diagrams, where $1\leq l \leq n-1$. Let $D' : = D_1 \cup D_2 \cup \cdots \cup D_{n-1}$ and $D'': = D_n$, and note that $D'$ has $k_1 + \cdots + k_{n-1} = k - k_n$ components, while $D''$ has $k_n$ components. Then $D = D' \cup D''$ and $R(D') = \phi(D') h + \psi(D'), \, R(D'') = \phi(D'') h + \psi(D'')$, where
\[  \phi(D') \in \mathbb{Z}[A^4,A^{-4}] \cdot A^{2(k-k_n-1)}, \,\,  \,\,  \psi(D') \in \mathbb{Z}[A^4,A^{-4}] \cdot A^{2(k-k_n)}, \]
\[  \phi(D'') \in \mathbb{Z}[A^4,A^{-4}] \cdot A^{2(k_n-1)}, \,\,  \,\,  \psi(D'') \in \mathbb{Z}[A^4,A^{-4}] \cdot A^{2k_n}. \]
Then $R(D) = \phi(D)h + \psi(D)$, where by Lemma~\ref{proof thm-lemma3}, 
\begin{eqnarray*}
\phi (D) &=& \phi(D') \psi(D'') + \psi(D')\phi(D'') \in \mathbb{Z}[A^4,A^{-4}] \cdot A^{2(k-1)} \\
\psi(D) &=& \phi(D') \phi(D'') + \psi(D') \psi(D'') \in \mathbb{Z}[A^4,A^{-4}] \cdot A^{2k}.  
\end{eqnarray*}
Therefore, the statement holds for any $n\geq 1$. \hfill $\square$

\begin{corollary}
The extended Kauffman-Jones polynomial of a virtual singular link $L$ with $k$ components decomposes non-trivially into two parts:
\[ \left < \,L\, \right > = \phi(L) + \psi(L),\]
where  $\phi(L) \in \mathbb{Z}[A^4,A^{-4}] \cdot A^{2(k-1)}$ and $\psi(L) \in \mathbb{Z}[A^4,A^{-4}] \cdot A^{2k} $. 
\end{corollary}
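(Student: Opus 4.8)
The plan is to deduce this immediately from Theorem~\ref{thm:splitting} by specializing the auxiliary variable $h$ to $1$. Recall from the Remark following Definition~\ref{def:new-poly} that $R(D)_{|h=1} = \left<\, D\,\right>$ for any diagram $D$, hence $R(L)_{|h=1} = \left<\, L\,\right>$ for the associated virtual singular link $L$; this specialization is well defined on links because both $R(\,\cdot\,)$ and $\left<\,\cdot\,\right>$ are ambient isotopy invariants (Theorems~\ref{thm:invR} and~\ref{thm:inv}).

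First I would invoke Theorem~\ref{thm:splitting}: if $L$ has $k$ components, then $R(L) = \phi(L)\,h + \psi(L)$ with $\phi(L) \in \mathbb{Z}[A^4,A^{-4}]\cdot A^{2(k-1)}$ and $\psi(L) \in \mathbb{Z}[A^4,A^{-4}]\cdot A^{2k}$. Substituting $h = 1$ yields $\left<\, L\,\right> = \phi(L) + \psi(L)$ with exactly the claimed memberships, which is the content of the corollary.

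Next I would address the word ``non-trivially''. The point is that the two summands occupy disjoint pieces of $\mathbb{Z}[A^2,A^{-2}]$: every monomial occurring in $\phi(L)$ has $A$-degree $\equiv 2(k-1) \equiv 2k+2 \pmod 4$, while every monomial in $\psi(L)$ has $A$-degree $\equiv 2k \pmod 4$, and these two residues are distinct modulo $4$. Consequently $\phi(L)$ and $\psi(L)$ are recovered from $\left<\, L\,\right>$ by collecting, respectively, the terms whose exponent of $A$ is $\equiv 2k+2$ and those whose exponent is $\equiv 2k \pmod 4$; so the decomposition is canonical, and $\phi(L), \psi(L)$ are themselves virtual singular link invariants, as already established in the Proposition preceding the definition of $R(L)$. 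This is what makes the splitting genuine rather than a formal artifact.

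There is essentially no obstacle here: the corollary is a direct specialization of Theorem~\ref{thm:splitting}, whose inductive proof (on the number of connected components of a diagram, via Lemmas~\ref{proof thm-lemma1}, \ref{proof thm-lemma2}, and~\ref{proof thm-lemma3}) has already done all the work. The only point worth spelling out is the elementary mod-$4$ bookkeeping showing $2(k-1) \not\equiv 2k \pmod 4$, which guarantees that the two components do not overlap and that the decomposition $\left<\, L\,\right> = \phi(L)+\psi(L)$ is unique.
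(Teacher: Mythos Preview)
Your proposal is correct and follows essentially the same route as the paper: the paper's proof consists of the single sentence that the statement follows from Theorem~\ref{thm:splitting} together with the fact that $R(L)_{|h=1} = \left<\,L\,\right>$. Your additional paragraph spelling out the mod-$4$ bookkeeping that makes the decomposition canonical (and hence ``non-trivial'') is a welcome clarification that the paper leaves implicit.
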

\begin{proof}
The statement follows from Theorem ~\ref{thm:splitting} and the fact that $R(L)_{|h = 1} = \left < \,L\, \right >$.
\end{proof}

\end{document}